\newtheorem{theorem}{Theorem}[section]
\newtheorem*{theorem*}{Theorem B}
\newtheorem{lemma}[theorem]{Lemma}
\newtheorem{proposition}[theorem]{Proposition}
\newtheorem{corollary}[theorem]{Corollary}
\newtheorem*{definition*}{Definition}
\newtheorem*{remark*}{Remark}
\newtheorem*{observation*}{Observation}
\newtheorem*{assumption*}{Assumption}
\newtheorem*{question*}{Question}
\newtheorem*{problem*}{Problem}
\newtheorem*{example*}{Example}
\newcommand{\R}{\mathbb{R}}
\newcommand{\N}{\mathbb{N}}
\newcommand{\E}{\mathbb{E}}
\newcommand{\Tr}{\mathrm{Tr}}
\newcommand{\Var}{\mathrm{Var}}
\newcommand{\an}{\text{\, and \,}}
\newcommand{\indi}{\mathds{1}}
\journal{Stochastic Processes and their Applications}
\begin{document}

\begin{frontmatter}



\title{Spectral measure of large random Helson matrices}

\author[label1]{Yanqi Qiu}
\ead{yanqiqiu@ucas.ac.cn}

\author[label2]{Guocheng Zhen\corref{cor1}}
\ead{guochengzhen@whu.edu.cn}

\affiliation[label1]{organization={School of Fundamental Physics and Mathematical Sciences, HIAS, University of Chinese Academy of Sciences}, 
	city={Hangzhou},
	postcode={310024}, 
	state={Zhejiang},
	country={China}}

\affiliation[label2]{organization={School of Mathematics and Statistics, Wuhan University}, 
	city={Wuhan},
	postcode={430072}, 
	state={Hubei}, 
	country={China}}

\cortext[cor1]{Corresponding author}

\begin{abstract}
We study the limiting spectral measure of large random Helson matrices and large random matrices of certain patterned structures.

Given a real random variable $X \in L^{2+ \varepsilon}(\mathbb{P}) $ for some $\varepsilon > 0$ and $\mathrm{Var}(X) = 1$. For the random $n \times n$ Helson matrices generated by the independent copies of $X$, scaling the eigenvalues by $\sqrt{n}$, we prove the almost sure weak convergence of the spectral measure to the standard Wigner semi-circular law. Similar results are established for  large random matrices with certain general patterned structures.
\end{abstract}

\begin{keyword}
number of divisors \sep Random Helson Matrices \sep spectral measure \sep semi-circular law
\end{keyword}

\end{frontmatter}

\section{Introduction}\label{introduction}
The Helson matrix, also known as the multiplicative Hankel matrix, is a fundamental object in the theory of Hardy spaces of Dirichlet series introduced by Helson in \cite{Henry-Helson}. It plays a similar role as (additive) Hankel matrices play in the analysis of holomorphic functions in the unit disk. As such, questions regarding whether or not classical results for Hankel matrices can be extended to the multiplicative setting have attracted considerable attentions. Significant research has been conducted on this matrix, including studies on its explicit eigenvalue asymptotics \cite{Helson}, various operator properties \cite{finite} and many other works (see e.g. \cite{P-onto, fo, lower-bound}).

Large-dimensional random matrix theory (LDRM) originated in the early 1940s as a tool for analyzing the complex structure of heavy nuclei within quantum mechanics. This pioneering application led to the introduction of the well-known Wigner matrix and this marked the beginning of LDRM as a distinct and rapidly evolving field of mathematical research.

One of the main focus in the field of LDRM is the study of limiting distribution of the eigenvalues and   a remarkable  achievement  in  Wigner's work is the establishement of the semi-circular law for the limiting distribution of its eigenvalues \cite{Wigner}.

Following the work of Wigner, LDRM saw significant theoretical expansion and  many fundamental results were established on the limiting spectral distributions of various random matrix models. These include the Marchenko-Pastur law for sample covariance matrices, the limiting spectral measures of Hankel and Toeplitz random matrices \cite{Hankel}, and the identification of the symmetrized Rayleigh law and the Gaussian law \cite[Sections 8.2, 8.6-8.7]{patterned}. Throughout this development, Gaussian Ensembles---the random Hermitian or real symmetric matrices with Gaussian entries---have held a central position. Research on the Gaussian Unitary Ensemble (GUE), in particular, is highly advanced, featuring profound theoretical results \cite[Sections 2-3]{Intro-RM}.

 LDRM models featuring specific patterns arise naturally across diverse disciplines, including econometrics, computer science, mathematics, physics, and statistics. Consequently, the existence and properties of the limiting spectral distribution for patterned random matrices—such as Reverse Circulant and Elliptic matrices---as the dimension grows have been extensively studied \cite[Sections 8-10]{patterned}. Recent research continues to explore advanced topics within the field, such as the asymptotic freeness of Gaussian random matrices \cite{MC}.

\subsection{Statement of the main results}
In this paper, we give the results on the limiting spectral measure of large random Helson matrices as well as that of large random patterned matrices with specific structures.

We start with recalling the definition of the spectral measure. For a symmetric $n \times n$ real matrix $A$, let $\lambda_{j}(A), 1 \leq j \leq n$, denote the eigenvalues of the matrix $A$, written in a nonincreasing order. The spectral measure of $A$, denoted $\widehat{\mu}(A)$, is given by
\[
\widehat{\mu}(A) = \frac{1}{n} \sum_{i=1}^{n} \delta_{\lambda_{j}(A)}.
\]

Now, let $\N=\{1, 2, \cdots\}$ be the semigroup of positive integers and let $mul$ denote the usual multiplication map $mul: \N\times \N\rightarrow\N$ given by $mul(i,j)= ij$. Let $X$ be a real random variable. For $n \in \mathbb{N}$, we define the corresponding random $n \times n$ Helson matrix as follows:
let $\{X_{k}:k=1,2,...\}$ be the independent copies of $X$, set $H_{n} = [X_{mul(i,j)}]_{1 \leq i,j \leq n}$, with $mul(i,j) = ij$. More precisely,
\[ 
H_{n} = \begin{bmatrix}
	X_{1} & X_{2} &  X_3 & \cdots & X_{n} \\
	X_{2} & X_{4} &  X_6&  \cdots & X_{2n} \\
	X_3&X_6 & X_9 & \cdots & X_{3n}\\
	\vdots & \vdots & \vdots &\ddots & \vdots \\
	X_{n} & X_{2n} & X_{3n}& \cdots & X_{n^{2}} 
\end{bmatrix}.
\]

Recall that the standard Wigner semi-circular law $\gamma_{sc}$ is the probability measure supported on [-2,2] given by
\begin{equation}\label{semi-circle}
	d\gamma_{sc}(x) = \mathds{1}_{[-2,2]} \frac{\sqrt{4-x^2}}{2 \pi}dx.
\end{equation}

\begin{theorem}\label{thm-main}
	Let $X$ be an arbitrary real random variable such that  $\Var(X)=1$ and  $\E[|X|^{2+\varepsilon}]<\infty$ for some $\varepsilon >0$.  Let $H_{n}$ be the corresponding random $n \times n$ Helson matrix. Then, with probability 1, $\widehat{\mu}(H_{n} / \sqrt{n})$ converges weakly to the Wigner semi-circular law $\gamma_{sc}$ as $n \rightarrow \infty$.
\end{theorem}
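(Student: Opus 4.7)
The plan is to use the classical method of moments, adapted to handle the multiplicative structure of Helson matrices. First, center: if $\mu:=\E[X]\ne 0$, then $H_n=\mu J_n + H_n^{(0)}$ with $J_n$ the all-ones matrix and $H_n^{(0)}$ the Helson matrix built from the centered variable; since $J_n$ has rank one, a standard interlacing argument shows that $\widehat{\mu}(H_n/\sqrt n)$ and $\widehat{\mu}(H_n^{(0)}/\sqrt n)$ have the same almost-sure weak limit, so I may assume $\E[X]=0$. Because only $L^{2+\varepsilon}$ integrability is available, I then truncate, replacing $X$ by $\widetilde X = X\indi_{|X|\le n^{\eta}} - \E[X\indi_{|X|\le n^{\eta}}]$ for a sufficiently small $\eta=\eta(\varepsilon)>0$, and use the Hoffman--Wielandt inequality together with the $L^{2+\varepsilon}$ tail bound to show that $\widehat{\mu}(H_n/\sqrt n)$ and $\widehat{\mu}(\widetilde H_n/\sqrt n)$ share the same almost-sure weak limit.

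\textbf{Moment expansion.} For the truncated matrix, I expand
\[
\frac{1}{n}\,\E\bigl[\Tr\bigl((\widetilde H_n/\sqrt n)^k\bigr)\bigr] \;=\; \frac{1}{n^{1+k/2}}\sum_{i_1,\ldots,i_k\in[n]}\E\bigl[\widetilde X_{i_1 i_2}\widetilde X_{i_2 i_3}\cdots\widetilde X_{i_k i_1}\bigr],
\]
and classify the closed walks $(i_1,\ldots,i_k,i_1)$ according to the equivalence relation $j\sim l\iff i_j i_{j+1}=i_l i_{l+1}$. Since $\widetilde X$ is centered, only partitions whose blocks all have size $\ge 2$ contribute. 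The expected leading contribution, of order $n^{1+k/2}$, should come from non-crossing pair partitions together with the \emph{trivial} coincidences $\{i_j,i_{j+1}\}=\{i_l,i_{l+1}\}$; this reproduces the usual Wigner walk count and yields $C_{k/2}$ for even $k$ and $0$ for odd $k$.

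\textbf{Main obstacle.} The central new difficulty, absent from the Wigner setting, is to control coincidences $i_j i_{j+1}=i_l i_{l+1}$ that are not forced by the set equality $\{i_j,i_{j+1}\}=\{i_l,i_{l+1}\}$: for a given product $m\le n^2$, the partner pair $(i_l,i_{l+1})$ may take $d(m)$ admissible values rather than $O(1)$, where $d$ is the divisor function, so a priori even a crossing pair partition could threaten to contribute at leading order. The essential tool is the classical Dirichlet-type bound $\sum_{m\le N} d(m)^c = O\bigl(N(\log N)^{2^c-1}\bigr)$, applied (possibly iteratively) to estimate the number of walks consistent with each partition $\pi$. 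A case analysis should show that the aggregate contribution from all partitions containing at least one non-trivial factorization, as well as from every partition containing a block of size $\ge 3$, is at most $O\bigl(n^{1+k/2-\delta}(\log n)^A\bigr)$ for some $\delta=\delta(k)>0$. Balancing the truncation parameter $\eta$ against the block-moment bound $\E|\widetilde X|^{|B|}\le n^{\eta(|B|-2-\varepsilon)}\E|X|^{2+\varepsilon}$ for blocks $B$ of size $\ge 3$ is the most delicate technical point.

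\textbf{Almost-sure convergence.} To upgrade convergence in expectation to almost-sure convergence, I estimate the variance $\Var\bigl(n^{-1}\Tr((\widetilde H_n/\sqrt n)^k)\bigr)$ by running the same walk analysis on pairs of disjoint closed walks; divisor-function estimates again yield a bound of order $n^{-2}(\log n)^A$, which is summable in $n$. Borel--Cantelli, together with the Carleman condition for the compactly supported measure $\gamma_{sc}$, then delivers the almost-sure weak convergence of $\widehat{\mu}(H_n/\sqrt n)$ to $\gamma_{sc}$.
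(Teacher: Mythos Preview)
Your outline is essentially correct and follows the same moment-method strategy as the paper, but the execution differs in three places worth noting.

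\medskip
\textbf{Truncation.} The paper does \emph{not} truncate at a growing level $n^\eta$. It truncates at a fixed level $u$, and then proves almost-sure convergence of $\tfrac{1}{n^2}\Tr((H_n-\sigma(u)\widehat H_n)^2)$ to its mean via Teicher's SLLN for weighted double arrays, which in turn requires Ford's sharp estimate $|A_n|\asymp n^2/(\log n)^c(\log\log n)^{3/2}$ for the multiplication table. Your $n^\eta$-truncation can be made to work too, but the phrase ``Hoffman--Wielandt together with the $L^{2+\varepsilon}$ tail bound'' hides a nontrivial almost-sure step: since the entries $X_{ij}$ are \emph{not} independent across $(i,j)$, you need something like
\[
\frac{1}{n^2}\sum_{i,j\le n}X_{ij}^2\indi_{|X_{ij}|>n^\eta}
\;\le\;\frac{1}{n^{2+\eta\varepsilon}}\sum_{k\in A_n} m(k;n)\,|X_k|^{2+\varepsilon}
\;\le\;\frac{d(n^2)}{n^{\eta\varepsilon}}\cdot\frac{1}{n^2}\sum_{k\le n^2}|X_k|^{2+\varepsilon}
\;=\;n^{-\eta\varepsilon+o(1)}\cdot O(1)\ \text{a.s.},
\]
using $d(n^2)=n^{o(1)}$ and the ordinary SLLN for $\{|X_k|^{2+\varepsilon}\}$. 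You should make this explicit.

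\medskip
\textbf{Non-Catalan pair partitions.} Rather than separating ``trivial'' coincidences $\{i_j,i_{j+1}\}=\{i_l,i_{l+1}\}$ from ``non-trivial'' ones and running a case analysis with $\sum_{m\le N}d(m)^c$, the paper introduces an abstract framework: it shows that for any map $S$ satisfying the single ``small dimension'' condition
\[
\#\bigl\{(x,y,z,w)\in[n]^4:\;S(x,y)=S(z,w)\bigr\}=o(n^3),
\]
every non-Catalan partition word has $p_n^S(w)\to 0$. The mechanism is a structural lemma on reduced words (after sequentially deleting double letters, any nonempty reduced word contains a pattern $\cdots a\,b\cdots a\cdots$ with $b$ already seen) which isolates a single equation $S(x_\alpha,x_{\alpha+1})=S(x_\beta,x_{\beta+1})$ in which three of the four variables are free. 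For $S(x,y)=xy$ the small-dimension count is exactly your divisor sum, $\sum_{m}m(k;n)^2=O(n^2\log^3 n)$, so the underlying arithmetic is the same; but the paper's route avoids a partition-by-partition case analysis and, as a bonus, yields a general theorem for $S$-patterned matrices.

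\medskip
\textbf{Concentration.} The paper uses a \emph{fourth}-moment bound $\E[(\Tr(H_n^S)^r-\E\Tr(H_n^S)^r)^4]\le C_r n^{2r+2}$ (via the matched-quadruple count of Bryc--Dembo--Jiang), which gives $\PP(|\cdot|>\delta)=O(n^{-2})$ directly. Your claimed variance bound of order $n^{-2}(\log n)^A$ is stronger than what the circuit count alone yields; without additional cancellation arguments you will only get $\Var=O(n^{-1})$, which is not summable. Switching to the fourth-moment route fixes this with no extra ideas.
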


\begin{remark*}
	Simulations suggest that Theorem~\ref{thm-main} is likely to be true, see Figure~\ref{Figure 1} where we compare the histograms of the empirical distribution of eigenvalues of 100 realizations of the random $1000 \times 1000$ Helson matrices generated by standard Gaussian variables with the Wigner semi-circular law $\gamma_{sc}$.
\end{remark*}

\begin{figure}[htbp]
	\centering
	\includegraphics[width=0.6\textwidth]{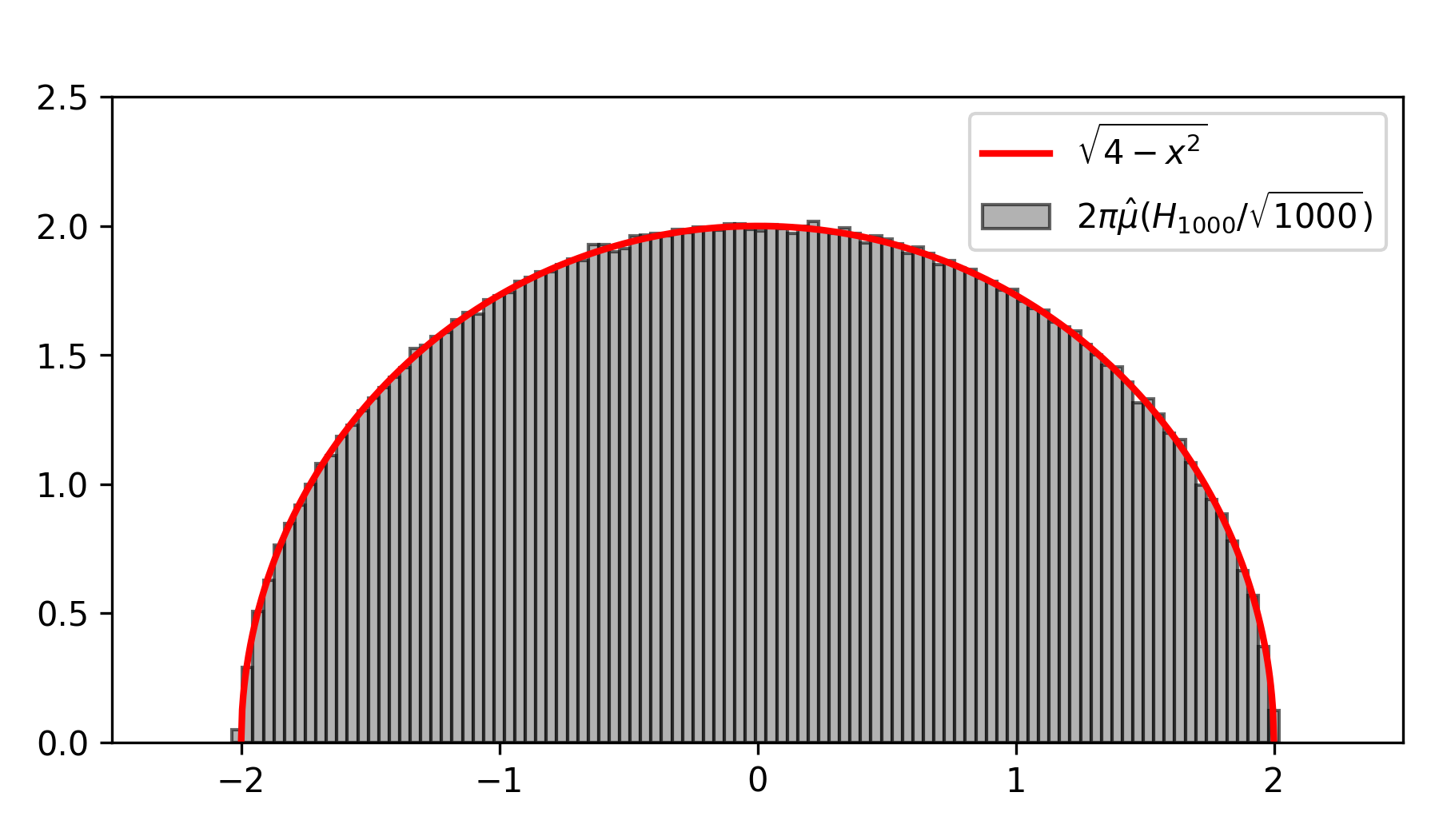}
	\caption{Comparison between $\widehat{\mu}(H_{1000}/\sqrt{1000})$ and the Wigner semi-circular law $\gamma_{sc}$. (generated by standard Gaussian variables and both scaled up by $2\pi$ times)}
	\label{Figure 1}
\end{figure}

Theorem~\ref{thm-main}  can be generalized to more abstract setting. For presenting this result, we need to introduce more notations.

Let $I$ be an index set and $S \colon \mathbb{N}^2 \to I$ be a map. For any $i \in I$, let
\[
S_i := \{(x,y) \in \mathbb{N} \times \mathbb{N} | S(x,y) = i\} 
\] 
be the level set of the map $S$. Let $X$ be a real random variable. For $n \in \mathbb{N}$, we define the corresponding random $n \times n$ $S$-patterned matrix $H_{n}^{S}$ as follows:
Let $\{X_{i}\}_{i \in I}$ be the independent copies of $X$, set
\[
H_{n}^{S}=[X_{S(i,j)}]_{1 \leq i,j \leq n}.
\]
As usual, let $[n]=\{1,2,\cdots,n\}$. We denote the truncation of the level set $S_{i}$ onto the square $[n]^2$ by 
\[
S_i^{(n)} = S_i \cap [n]^2
\]
and denote the number of the elements in $S_i^{(n)}$ by $|S_i^{(n)}|$. 

Our key assumption on the map $S$ is that it satisfies the following $C$-condition:
\begin{enumerate}
	\item[(C1)] Symmetry: $S(x,y) = S(y,x)$ for any $x,y \in \mathbb{N}$;
	\item[(C2)] Coordinatewise injectivity: Fix any $ y \in \mathbb{N}$, the map $S(\cdot,y)$ is injective;
	\item[(C3)] Small dimension assumption: 
	\begin{equation}\label{sda}
		\sum_{i \in I} |S_i^{(n)}|^2 = o(n^3) \,\, \text{as $n\to\infty$}.
	\end{equation}
\end{enumerate}

\begin{remark*}
	The small dimension assumption $(\ref{sda})$ is equivalent to
	\[
	\lim_{n \to \infty} \frac{\#\{(x,y,z,w) \in [n]^4| S(x,y)=S(z,w) \}}{n^3} = 0. 
	\]
\end{remark*}

\begin{theorem}\label{thm-gen-semi}
	Assume that $S$ is a map satisfying the $C$-condition. Let $X$ be an arbitrary bounded real random variable such that  $\Var(X)=1$. Let $H_{n}^{S}$ be the corresponding random $n \times n$ $S$-patterned matrix. Then with probability 1, $\widehat{\mu}(H_{n}^{S} / \sqrt{n})$ converges weakly to the Wigner semi-circular law $\gamma_{sc}$ as $n \rightarrow \infty$.
\end{theorem}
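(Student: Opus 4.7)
The plan is to apply the method of moments: I aim to show that for each fixed integer $k \geq 1$,
\[
M_{n,k} := \frac{1}{n^{1+k/2}} \Tr\bigl((H_n^S)^k\bigr) = \frac{1}{n^{1+k/2}} \sum_{(i_1,\ldots,i_k) \in [n]^k} \prod_{a=1}^{k} X_{S(i_a, i_{a+1})}
\]
(with cyclic convention $i_{k+1}=i_1$) converges almost surely to the $k$-th moment $m_k$ of $\gamma_{sc}$; that is, $m_{2m+1}=0$ and $m_{2m}=C_m$, the Catalan number. Since $\gamma_{sc}$ is determined by its moments, almost-sure convergence of all $M_{n,k}$ upgrades to almost-sure weak convergence of the spectral measure. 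As a preliminary reduction, writing $X = (X-\E X) + \E X$ decomposes $H_n^S$ as the mean-zero patterned matrix plus the deterministic rank-one matrix $(\E X) \cdot J_n$ with $J_n = \mathbf{1}\mathbf{1}^\top$; since the L\'evy distance between $\widehat{\mu}(A)$ and $\widehat{\mu}(A+B)$ is bounded by $\rank(B)/n$, this perturbation does not affect the weak limit, so I may assume $\E X = 0$.

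I next classify the terms of $\E[\Tr((H_n^S)^k)]$ by the partition $\pi(\mathbf{i})$ of $[k]$ defined via $a \sim b \iff S(i_a, i_{a+1}) = S(i_b, i_{b+1})$. Independence and mean-zero force every block of $\pi(\mathbf{i})$ to have size at least $2$ for a nonzero contribution. The essential consequence of (C1) and (C2) is the following dichotomy: whenever $S(x,y) = S(z,w)$, either $\{x,y\} = \{z,w\}$ as unordered pairs (a \emph{coincident} match) or $\{x,y\} \cap \{z,w\} = \emptyset$ (an \emph{exotic} match), since any shared coordinate combined with coordinatewise injectivity already forces pair equality.

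The dominant contribution comes from tuples $\mathbf{i}$ whose pattern $\pi(\mathbf{i})$ is a pair partition with every block coincident; a standard closed-walk / plane-tree count then produces the leading term $C_m\,n^{1+m}(1+o(1))$ for $k=2m$, with contribution from non-crossing pair partitions only, and vanishes for odd $k$. The main technical step is to show that all other configurations --- those containing at least one exotic match or at least one block of size $\geq 3$ --- contribute $o(n^{1+k/2})$. Each exotic match is absorbed using the small-dimension assumption (C3) in the form
\[
\sum_{i \in I} |S_i^{(n)}|^2 = \#\{(x,y,z,w) \in [n]^4 : S(x,y)=S(z,w)\} = o(n^3),
\]
which saves a factor of $n$ over the trivial $n^4$ count on four free indices. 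Blocks of size $\geq 3$ are handled by $\|X\|_\infty < \infty$, which bounds the expectation uniformly while strictly reducing the number of independent index degrees of freedom. I expect the careful bookkeeping --- tracking, within a single partition, the savings coming from coincident, exotic, and higher-multiplicity blocks simultaneously, and verifying that no combination can match the leading scale $n^{1+k/2}$ --- to be the principal obstacle.

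Finally, to promote convergence in expectation to almost-sure convergence, I would estimate $\Var(M_{n,k})$ by expanding $\E[M_{n,k}^2] - (\E[M_{n,k}])^2$ as a sum over pairs of closed $k$-walks. The decoupled pairs cancel exactly between the two terms, while the coupled pairs (those sharing at least one $S$-value) admit a bound of order $O(n^{-2})$ by a direct adaptation of the combinatorial analysis above, invoking (C3) once more. Summability of these variances then yields almost-sure convergence of each $M_{n,k}$ via Borel--Cantelli, and a countable intersection over $k \in \N$ completes the proof.
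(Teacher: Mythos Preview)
Your overall strategy---moment method, reduction to $\E X=0$ via a rank-one perturbation, and the coincident/exotic dichotomy coming from (C1)--(C2)---matches the paper's, and your expectation analysis is a legitimate reformulation of the paper's Catalan/non-Catalan word argument (Lemma~\ref{p_n(w)} and Lemma~\ref{noncartalan p_n(w)}). The dichotomy you state is exactly what underlies the paper's reduction $p_n^S(w_1aaw_2)=p_n^S(w_1w_2)$, and your ``one exotic match saves a factor of $n$ via (C3)'' is precisely how Lemma~\ref{noncartalan p_n(w)} works.

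The genuine gap is in your concentration step. Your claim that $\Var(M_{n,k})=O(n^{-2})$ does \emph{not} follow from (C3), and is in fact false in general. Take $k=2$: then $\Tr((H_n^S)^2)-\E\Tr((H_n^S)^2)=\sum_{\ell}|S_\ell^{(n)}|\,(X_\ell^2-\E X_\ell^2)$, so
\[
\Var(M_{n,2})=\frac{\Var(X^2)}{n^4}\sum_{\ell}|S_\ell^{(n)}|^2.
\]
Condition (C3) only says $\sum_\ell |S_\ell^{(n)}|^2=o(n^3)$, giving $\Var(M_{n,2})=o(1/n)$, which is \emph{not} summable. One can construct $S$ satisfying the $C$-condition with $\sum_\ell|S_\ell^{(n)}|^2\asymp n^3/\log n$ (take roughly $n/\log n$ symmetric level sets each of size $n$, the rest singletons), for which $\Var(M_{n,2})\asymp 1/(n\log n)$ and Borel--Cantelli fails. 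So ``invoking (C3) once more'' cannot yield $O(n^{-2})$.

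The paper circumvents this by bounding the \emph{fourth} centered moment instead (Proposition~\ref{4-order trace}): one shows $\E\bigl[(\Tr((H_n^S)^r)-\E\Tr((H_n^S)^r))^4\bigr]\le \widetilde C_r\,n^{2r+2}$ via Lemma~\ref{quadruples}, which counts matched quadruples of circuits in which each circuit has at least one cross-match. Crucially, that count uses only (C1)--(C2), not (C3), and after normalization gives $\E[(M_{n,k}-\E M_{n,k})^4]\le C/n^2$, which \emph{is} summable. Replace your variance argument by this fourth-moment estimate and the rest of your outline goes through.
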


\subsection{Outline of the Proofs of Theorem~\ref{thm-main} and Theorem~\ref{thm-gen-semi}}
\label{subsec:outline}

We now give the outline of the proofs of Theorem~\ref{thm-main} and Theorem~\ref{thm-gen-semi}:

\begin{itemize}
    \item {\bf Step 1. Reduction to bounded entries (Proposition~\ref{prop-TB}).}
    
    By a standard truncation and rescaling argument (cf.~\cite{Hankel}), carried out in Proposition~\ref{prop-TB}, we may assume without loss of generality that the entries $X_j$ are centered and uniformly bounded. 

  \item {\bf Step 2. Moment computations for bounded entries (Proposition~\ref{moments}).}
    \begin{enumerate}
      \item To describe the moments of the average spectral measure of random $S$-patterned matrix in Theorem~\ref{thm-gen-semi},  for an integer $r$, we need to estimate the following quantity
      \begin{align*}
		\begin{split}
			\mathbb{E}[\Tr((H_n^{S})^{r})] 
			&= \sum_{i_1,\cdots,i_{r}=1}^{n} \mathbb{E}[H_{n}^{S}(i_1,i_2)H_{n}^{S}(i_2,i_3) \cdots H_{n}^{S}(i_r,i_1)]\\
			&=\sum_{i_1,\cdots,i_{r}=1}^{n} \mathbb{E}[X_{S(i_1,i_2)} X_{S(i_2,i_3)} \cdots X_{S(i_r,i_1)}].\\
		\end{split}
	\end{align*}

    \item Following the methods used in \cite{Hankel}, we introduce the combinatorics object circuits and get the following equation:
    \[
    \mathbb{E}[\Tr((H_n^{S})^{r})] = \sum_{\pi} \mathbb{E}[X_{\pi}],
    \]
    where the sum is over all circuits in $\{1, \ldots, n\}$ of length $r$ (see the beginning of \S \ref{sec-cir} for the precise definition of circuits and $S$-matched ones). In order to divide the summation terms into main contribution terms and negligible terms: 
    \begin{align}\label{cir-dec}
     \mathbb{E}[\Tr((H_n^{S})^{r})]  =  \underbrace{\sum_{\text{pair-matched circuits $\pi$}} \mathbb{E}[X_{\pi}]}_{T_{main}(n, r)} + \underbrace{\sum_{\text{other circuits $\pi$}} \mathbb{E}[X_{\pi}]}_{T_{neg}(n, r)},
    \end{align}
    we introduce the system of equations \eqref{equation} associated to the map $S$ which satisfied the $C$-condition in Theorem~\ref{thm-gen-semi} as the authors did in \cite{Hankel}.   In particular, the $C$-condition is used to show that the negligible terms $T_{neg}$ is negligible in the sense that,  for an integer $r\ge 1$, we have 
    \begin{align}\label{neg-is-neg}
    \lim_{n\to\infty} \frac{1}{n^{\frac{r}{2}+1}} T_{neg}(n, r) = 0. 
    \end{align}

      \item Then we use some key lemmas about the number of $S$-matched circuits in $\{1,\cdots,n\}$ of length $r$ with at least one edge of order $3$ (see Lemma~\ref{lem-3ord}) given in \cite{Hankel}, together with properties about partition words and Catalan words (Lemma~\ref{Catalan}, Lemma~\ref{deleting}, Corollary~\ref{non-cartalan word}, Lemma~\ref{p_n(w)}, Lemma~\ref{noncartalan p_n(w)}) to calculate the main contribution terms and get the limit moments of the average spectral measure: for $k \in \mathbb{N}$,
      \[
      \lim_{n \to \infty} \frac{1}{n^{k+1}} \mathbb{E}\left[\Tr((H_n^{S})^{2k})\right] = \frac{(2k)!}{(k+1)!k!},
      \]
      and
      \[
		\lim_{n \to \infty} \frac{1}{n^{k+1/2}} \mathbb{E}\left[\Tr((H_n^{S})^{2k-1})\right] = 0.
      \]
    \end{enumerate}

  \item {\bf Step 3. Concentration of moments of the spectral measure (Proposition~\ref{4-order trace}).}
  
     Using Lemma~\ref{quadruples} to set up a suitable $L^{4}$ bounds on trace moments, and then applying Chebyshev's inequality, the Borel-Cantelli lemma and the method of moments, we deduce the almost-sure weak convergence 
     \[
     \widehat{\mu}(H_{n}^{S}/\sqrt{n}) \xrightarrow[n \to \infty]{w} \gamma_{sc},
     \]
     completing the proof of Theorem~\ref{thm-gen-semi}.

  \item {\bf Step 4. Specializing to random Helson matrices (Proposition~\ref{multi}).}
   By verifying that the usual multiplication map $mul(i, j )= ij$ is a special case of the map $S$ satisfying the $C$-condition in Theorem~\ref{thm-gen-semi}, we derive Theorem~\ref{thm-main} from Theorem~\ref{thm-gen-semi}.
\end{itemize}

\subsection{Discussion on some assumptions}

In this subsection, we discuss about some assumptions we made in Theorem~\ref{thm-main} and Theorem~\ref{thm-gen-semi}.
\begin{itemize}
    \item {\bf 1. The intuition of small dimension condition (C3).}

    When we set up the models (random Helson matrices $H_{n}$ as well as $S$-patterned matrices $H_{n}^{S}$) in Theorem~\ref{thm-main} and Theorem~\ref{thm-gen-semi}, these patterns lead to some equal entries in the random matrices $H_n$ and $H_n^S$. In the random matrices $H_n$ and $H_n^S$,  the entries are more dependent than the situation of  the classical Wigner matrices. The intuition of small dimension assumption (C3):
    \[
	\lim_{n \to \infty} \frac{\#\{(x,y,z,w) \in [n]^4| S(x,y)=S(z,w) \}}{n^3} = 0
    \]
    is that even though some equal entries may lead to a lack of independence, this assumption guarantees not too much dependence between the entries.

  More precisely,  the small dimension assumption will be  used  in the proof of \eqref{neg-is-neg} (which is a consequence of  Lemma~\ref{noncartalan p_n(w)} below).  In particular, this small dimension assumption  is a sufficient condition to  prove Lemma~\ref{noncartalan p_n(w)}. Therefore,  we may weaken this condition if we can estimate directly the number of solutions of the equation system \eqref{equation} associated to a non-Catalan word. See the proof of Lemma~\ref{noncartalan p_n(w)} for details.

     \item {\bf 2. The bounded condition in Theorem~\ref{thm-gen-semi}.}

 In Theorem~\ref{thm-gen-semi},    we require that the variable $X$ is bounded,   while in  Theorem~\ref{thm-main}, we require $\mathbb{E}[|X|^{2 + \varepsilon}] < \infty$ for some $\varepsilon > 0$. The main reason is that we establish two number theory results \eqref{d_n} and \eqref{A_n} in \S \ref{number-theo1} and \S \ref{number-theo2}. Hence we can take advantage of the truncation technique and the sufficient conditions for almost sure convergence in double arrays (see Lemma~\ref{lem-Teicher}) to get a crucial limit in the proof of Proposition~\ref{prop-TB}.
      Therefore, if we can establish similar estimations for the abstract map $S$, then we may weaken the boundedness condition in Theorem~\ref{thm-gen-semi}.

     \item {\bf 3. The condition $\mathbb{E}[|X|^{2 + \varepsilon}] < \infty$ for some $\varepsilon > 0$ in Theorem~\ref{thm-main}.}

    In the assumption of Theorem~\ref{thm-main}, we need the condition that $\mathbb{E}[|X|^{2+\varepsilon}]<\infty$ for some $\varepsilon > 0$ rather than $\mathbb{E}[|X|^{2}]<\infty$. In fact, this is a technical assumption used in the proof of Proposition~\ref{prop-TB}, in which a bound on the number of different values in the $n \times n$ multiplication table is needed.

To compare the conditions $\E[|X|^{2}]<\infty$ with the condition $\mathbb{E}[|X|^{2 + \varepsilon}] < \infty$ for some $\varepsilon > 0$, we simulate two numerical experiments (see Figure~\ref{fig-comp}): we consider the probability density function:
    \[
    f_{X}^{\epsilon_0}(x) \propto \frac{1}{x^{3+\epsilon_{0}}(\log x)^{2}} \mathds{1}_{x \geq e},
    \]
    where $\epsilon_0 \geq 0$. Hence the variable $X$ has only a $(2+\epsilon_0)$ moment and no higher moments. In particular, if we choose $\epsilon_0 = 0$, then $X$ has only a second moment and no higher moments. 
   \begin{figure}[htbp]
      \centering
      \includegraphics[width=0.49\textwidth,keepaspectratio]{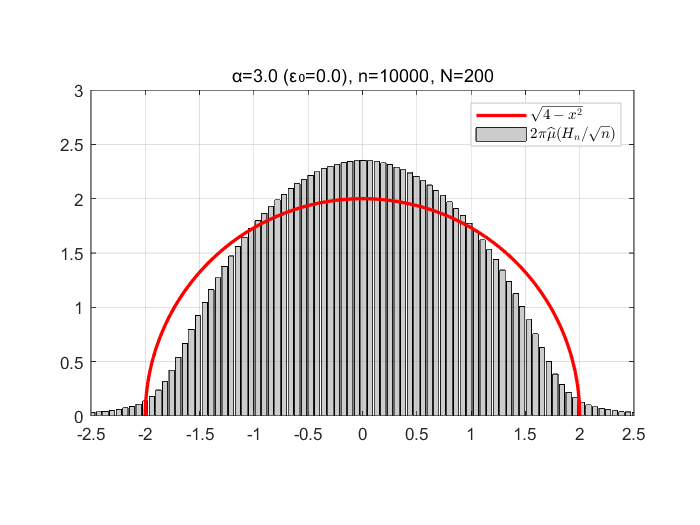}%
      \hfill
      \includegraphics[width=0.49\textwidth,keepaspectratio]{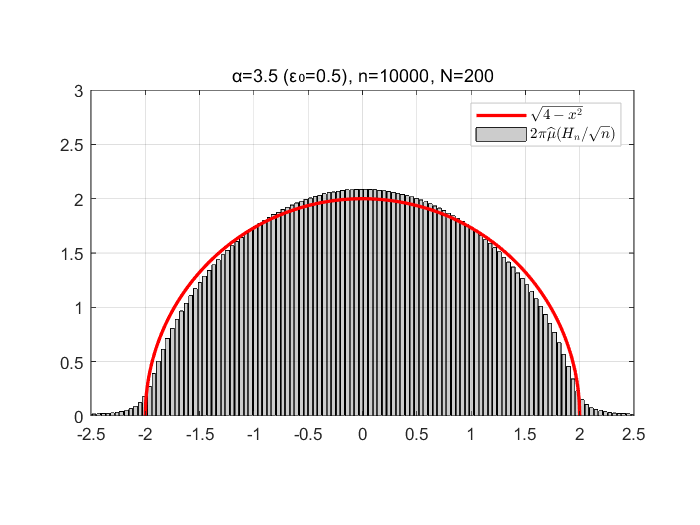}
      \caption{
        Comparison between $\epsilon_0 = 0$ and $\epsilon_0 = 0.5$ with matrix size $n = 10000$ and the number of realizations $N = 200$.
      }\label{fig-comp}
    \end{figure}
Compared with Figure~\ref{Figure 1}, we can observe that the convergence rate generated by the standard Gaussian variable is better than generated by the variable which has the probability density function $f_{X}^{0.5}(x)$. From Figure \ref{fig-comp}, we do not know  whether the spectral measure for $\epsilon_0 = 0$ weakly converges to the Wigner semi-circular law $\gamma_{sc}$ or not. A possible reason is the two very different behaviors of the tails of these two distributions (which decay super exponentially for the Gaussian one and polynomially for the other one).


\end{itemize}

\section{Preliminaries}
We will use the notation $\asymp$ as follows:
\[
f \asymp g \Leftrightarrow \exists C_{1}, C_{2} > 0: C_{1}|g| \leq f \leq C_{2}|g|.
\]
\subsection{The bounded lipschitz metric}
Let $d_{BL}$ denote the bounded Lipschitz metric
\begin{equation*}
	d_{BL}(\mu, \nu) = \sup \left\{ \int f \,d\mu - \int f \,d\nu : \|f\|_{\infty} + \|f\|_{L} \leq 1 \right\},
\end{equation*}
where $\|f\|_{\infty} = \sup_{x}|f(x)|$, $\|f\|_{L} = \sup_{x\neq y}|f(x) - f(y)|/|x - y|$. It is well known (cf. \cite[Section 11.3]{BL}) that $d_{BL}$ is a metric for the weak convergence of probability measures. Note that, for the spectral measures of $n \times n$ symmetric real matrices $A, B$ we have
\begin{align*}
	d_{BL}(\widehat{\mu}(A), \widehat{\mu}(B)) &\leq \sup \left\{ \frac{1}{n} \sum_{j=1}^{n} |f(\lambda_j(A)) - f(\lambda_j(B))| : \|f\|_{L} \leq 1 \right\} \nonumber \\
	&\leq \frac{1}{n} \sum_{j=1}^{n} |\lambda_j(A) - \lambda_j(B)|.
\end{align*}
Moreover, by Lidskii's theorem \cite{Lidski}
\begin{equation*}
	\sum_{j=1}^{n} |\lambda_j(A) - \lambda_j(B)|^2 \leq \Tr((B - A)^2),
\end{equation*}
we have
\begin{equation}\label{lidskii}
	d_{BL}^2(\widehat{\mu}(A), \widehat{\mu}(B)) \leq \frac{1}{n} \Tr((B - A)^2).
\end{equation}

\subsection{A useful rank inequality}
For an $n \times n$ Hermitian matrix $A$, let $\lambda_{j}(A), 1 \leq j \leq n$, denote the eigenvalues of the matrix $A$, written in a nonincreasing order, let 
\[
F^{A}(x) = \frac{\#\left\{k \leq n, \lambda_{k}(A) \leq x \right\}}{n}
\]
be the distribution function of the spectral measure $\widehat{\mu}(A)$.

The following result will be useful for us \cite[Lemma 2.2]{RI}:
let $A$ and $B$ be two $n \times n$ Hermitian matrices, then
\begin{equation}\label{rank inequality}
	\vert\vert F^{A} - F^{B} \vert\vert_{\infty} \leq \frac{1}{n} \mathrm{rank}(A-B).
\end{equation}

\subsection{Asymptotic of the number of divisors}\label{number-theo1}
For analyzing the spectral measure of random Helson matrices, we will need the following results from number theory. For any integer $n\ge 1$, denote 
\[
D(n)=\{l \in \mathbb{N} | \text{$l$ is a divisor of $n$}\}
\] 
the set of divisors of $n$ and let $d(n) = \#D(n)$ be the number of the divisors of $n$. By convention, we set  $d(x) = 0$ if $x$ is not an integer. Denote
\[
m(k;n) =   \#\{(i,j) | i,j \in [n],ij=k\}
\]
the number of times $k$ appears in the $n \times n$ multiplication table.
It is well known that (cf. \cite[Theorem 13.12]{divisor}) for all $\varepsilon > 0$,
\begin{equation}\label{d_n}
	d(n) = o(n^\varepsilon)  \, \, \text{as $n\to\infty$}.
\end{equation}
More precisely,
\begin{equation*}
	\limsup_{n \to \infty} \frac{\log d(n)}{\log n / \log \log n} = \log 2 < 1.
\end{equation*}
It follows that 
\begin{equation}\label{precise}
	d(n) = O(n^\frac{1}{\log \log n})  \,\, \text{as $n\to\infty$}.
\end{equation}

\subsection{Numbers of entries in multiplication table}\label{number-theo2}
For any integer $n\ge 1$, let
\[
A_n = \{xy|x,y \in [n]\}
\]
denote the set of distinct entries in the $n \times n$ multiplication table. Ford \cite[Corollary 3]{Ford} showed that the cardinality $|A_n|$ of the set $A_{n}$ has the following precise asymptotic order as $n \to \infty$:
\begin{equation}\label{A_n}
	|A_n| \asymp \frac{n^2}{(\log n)^c (\log \log n)^{\frac{3}{2}}},
\end{equation}
where 
\begin{equation*}
	c = 1 - \frac{1+\log\log2}{\log2} = 0.086071\ldots > 0.
\end{equation*}

\subsection{Moment methods}
For a Borel probability measure $\gamma$ on $\mathbb{R}$, we denote its $k$-th moment by
\[
m_{k}(\gamma) = \int_\R x^{k} \gamma(dx).
\]
Since $\gamma_{sc}$ has compact support, by Stone - Weierstrass theorem, it is uniquely determined by its moments: 
\[
m_n(\gamma_{sc}) =
\begin{cases}
	0 & \text{if } n=2k-1, \\
	C_{k}=\frac{1}{k+1}\binom{2k}{k} & \text{if } n=2k.
\end{cases}
\]

Let \( \widetilde{\mathcal{P}}(\mathbb{R}) \) be the set of probability measures $\mu$ on $\mathbb{R}$ such that $\int_{\mathbb{R}} \vert x^k \vert d\mu < \infty$ for all $k \in \mathbb{N}$. The following classical lemma (cf. \cite[Theorem 3.5]{method}) will be useful for us.

\begin{lemma}[From moments convergence to weak convergence]\label{m to w}
	Assume that \( \mu, \mu_1, \mu_2, \ldots \) all belong to \( \widetilde{\mathcal{P}}(\mathbb{R}) \) and
	\[ \lim_{n \to \infty} m_{k}(\mu_n) = m_{k}(\mu) \,\, \text{for all $k=0,1,\cdots$}. \]
	Assume moreover that \( \mu \) is characterized by its moments, then $\mu_n$ converges weakly to $\mu$.
\end{lemma}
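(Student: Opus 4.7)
The strategy is the classical method of moments (Lemma~\ref{m to w}), adapted to the patterned setting by means of the $C$-conditions, together with a Borel--Cantelli upgrade to almost sure convergence.

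\emph{Centering and setup.} Since $\E[H_n^S] = \E[X]\cdot J_n$ with $J_n$ the $n\times n$ all-ones matrix, and $J_n$ has rank one, \eqref{rank inequality} gives $\|F^{H_n^S/\sqrt{n}} - F^{(H_n^S - \E[X]J_n)/\sqrt{n}}\|_\infty \leq 1/n \to 0$. Weak convergence is preserved, so I may assume $\E[X]=0$ with $\Var(X)=1$. Boundedness of $X$ places the spectral measure of $H_n^S/\sqrt n$ in $\widetilde{\mathcal{P}}(\mathbb{R})$ for every $n$.

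\emph{Moment computation.} Expanding
\[
m_k(\widehat{\mu}(H_n^S/\sqrt{n})) = \frac{1}{n^{k/2+1}} \sum_{i_1, \ldots, i_k \in [n]} X_{S(i_1,i_2)}\,X_{S(i_2,i_3)}\cdots X_{S(i_k,i_1)}
\]
and taking expectations, independence together with $\E[X]=0$ restrict the sum to closed walks $(i_1,\ldots,i_k)$ whose $S$-labels each occur at least twice. Grouping by the partition $\pi$ of $\{1,\ldots,k\}$ into blocks of equal $S$-value, (C1) identifies oriented and unoriented edges, while (C2) forces any two label-matched edges sharing a vertex to coincide as unordered pairs. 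The edge-matching pair partitions then reproduce the classical Wigner counting, contributing $C_m n^{m+1}$ for $k=2m$ from the planar pairings and subleading terms otherwise. The remaining contributions are \emph{pure label collisions} --- pair partitions in which two vertex-disjoint edges of the walk merely share an $S$-value --- and these are controlled by (C3): each collision pair $(j,l)$ carries a count bounded by $\sum_{i \in I}|S_i^{(n)}|^2 = o(n^3)$ over $(i_j,i_{j+1},i_l,i_{l+1})$, which combined with the dimension bookkeeping for the remaining slots yields $o(n^{m+1})$ in total. For $k$ odd every admissible partition contains a block of size $\geq 3$, and the analogous count (using boundedness of $X$ together with (C3)) is likewise subleading. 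Hence $\E[m_{2m}] \to C_m$ and $\E[m_{2m+1}] \to 0$, matching the moments of $\gamma_{sc}$.

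\emph{Almost sure upgrade.} Expanding $\E[\Tr((H_n^S)^k)^2]$ as a sum over pairs of closed walks, any term that does not split as a product of two independent walk-expectations imposes at least one $S$-label coincidence across the two walks, which by the same $(C)$-bookkeeping costs an extra factor of $n^{-1}$ relative to $(\E[\Tr((H_n^S)^k)])^2$. This gives $\Var(m_k)=O(n^{-2})$, or if needed a summable fourth-moment bound $\E[(m_k-\E m_k)^4]=O(n^{-2-\delta})$ via the analogous longer-walk count. Borel--Cantelli applied to the countable family of events indexed by $k \in \mathbb{N}$ and positive rational $\varepsilon$ then yields a probability-one event on which $m_k \to m_k(\gamma_{sc})$ for every $k$, and Lemma~\ref{m to w} delivers almost sure weak convergence to $\gamma_{sc}$.

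\emph{Main obstacle.} The technical heart is the pure-label-collision count. Classical Wigner enjoys a clean bijection between label-matching and edge-matching, whereas here (C2) only handles collisions that share a vertex, and (C3) must be invoked --- possibly iteratively along the walk --- to dispose of the vertex-disjoint collisions uniformly in $m$, balancing the $o(n^3)$ bound from (C3) against the $n^{m+1}$ normalization across all blocks of $\pi$. The same combinatorial mechanism also drives the variance estimate required for the almost-sure conclusion.
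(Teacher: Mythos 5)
Your proposal does not prove the statement at hand. The statement is Lemma~\ref{m to w}, a purely measure-theoretic fact: if $\mu,\mu_1,\mu_2,\ldots$ have all moments finite, $m_k(\mu_n)\to m_k(\mu)$ for every $k$, and $\mu$ is determined by its moments, then $\mu_n\to\mu$ weakly. What you have written instead is an outline of the proof of Theorem~\ref{thm-gen-semi} (centering via the rank inequality, the walk/partition moment computation under the $C$-condition, the fourth-moment/Borel--Cantelli upgrade). Indeed, your argument explicitly \emph{invokes} Lemma~\ref{m to w} in its final step (``Lemma~\ref{m to w} delivers almost sure weak convergence''), so as a proof of that lemma it is circular; as a response to the assigned statement it is simply a proof of a different result. (For the record, the paper itself does not reprove the lemma either; it cites it as a classical fact.)

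A correct argument for the lemma would have to run along the following standard lines, none of which appear in your write-up: the convergence $m_2(\mu_n)\to m_2(\mu)$ gives $\sup_n m_2(\mu_n)<\infty$, hence tightness of $(\mu_n)$ by Chebyshev's inequality; by Prokhorov's theorem every subsequence has a further subsequence converging weakly to some limit $\nu$; along such a subsequence, the uniform bound $\sup_n m_{2k+2}(\mu_n)<\infty$ makes the family $\{x^k\}$ uniformly integrable with respect to $(\mu_n)$, so $m_k(\nu)=\lim m_k(\mu_n)=m_k(\mu)$ for all $k$; since $\mu$ is characterized by its moments, $\nu=\mu$, and since every subsequential limit equals $\mu$, the full sequence converges weakly to $\mu$. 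If you intend to use Lemma~\ref{m to w} in the main theorem (as the paper does), you must either supply this argument or cite it; your current text leaves the lemma itself unproved.
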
	

\section{Some combinatorics on partition words and circuits} \label{comb-property}
The partition words and the Catalan words will be useful for us in describing the structure of the moments of $\gamma_{sc}$. Let us introduce some properties of them.
\subsection{The properties of partition words and Catalan words}
Consider the set of letters $\mathbb{N}=\{1,2,\cdots\}$ equipped with the usual order. Thus, by a word, we mean a finite string of integer numbers from $\N$: 
\[
1234,    3321134,  4356,  etc.
\]  
In what follows, we will use the English letters $a,b,c,d$  and so on  to denote arbitrary letters from $\N$ (therefore,  there is no a priori order between the English letters $a, b, c, d$).  

By a pair partition, we mean a partition of a set into its subsets of cardinality 2. It is convenient to index the pair partitions by the partition words $w$; these are words of length $2k$ with $k$ pairs of letters such that the first occurrences of each of the $k$ letters are in alphabetic order. 

Equivalence classes may be identified with partitions of $\{1,2,\cdots,2k\}$, we will say two partition words $w_1,w_2$ are equivalent, and write $w_1 \sim w_2$, if they induce the same partition. In what follows, we will not distinguish the word and its equivalence class.

For example, in the case $k = 2$ we have $3$ such partition words

\begin{center}
	$1122, \quad 1221, \quad 1212,$
\end{center}
which correspond to the pair partitions

\begin{center}
	$\{1, 2\} \cup \{3, 4\} \quad \{1, 4\} \cup \{2, 3\} \quad \{1, 3\} \cup \{2, 4\}.$
\end{center}
while
\begin{center}
	$1112, \quad 1222, \quad 1111$
\end{center}
are not partition words.

\begin{definition*}
	A partition word $w$ of length $2k$ is called a Catalan word of length $2k$ if
	\begin{itemize}
		\item there is at least one double letter,
		\item sequentially deleting all double letters eventually leads to the empty word (which will be denoted by $\emptyset$).
	\end{itemize}
\end{definition*}

For example, $122133$, $12332441$ are Catalan words with the sequentially deleting procedure 
\[
122133 \to  11 \to \emptyset \an 12332441 \to 1221 \to 11 \to \emptyset. 
\]
while $121332$ is a non-Catalan word, since the sequentially deleting procedure is not $\emptyset$:
\begin{equation}\label{121332}
	121332 \to 1212 \ne \emptyset.
\end{equation}

The following result is well known \cite[Lemma 3]{Cata}.
\begin{lemma}\label{Catalan}
	The number of Catalan words of length 2k is given by the Catalan numbers 
	\[
	C_{k}=\frac{1}{k+1}\binom{2k}{k}=\frac{(2k)!}{(k+1)!k!}.
	\]
\end{lemma}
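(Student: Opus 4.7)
The plan is to establish a bijection between Catalan words of length $2k$ and non-crossing pair partitions of $\{1,2,\ldots,2k\}$, and then to count the latter via the classical Catalan recursion. First, identify each partition word $w$ of length $2k$ with the pair partition $\pi(w)$ of $\{1,\ldots,2k\}$ whose blocks are the position-sets of each alphabet appearing in $w$. Under this identification, deleting a double letter $aa$ occupying consecutive positions $\{i,i+1\}$ corresponds exactly to removing the adjacent block $\{i,i+1\}$ from $\pi(w)$ and re-indexing the remaining positions. Since an adjacent pair cannot cross any other pair, both this operation and its inverse preserve the non-crossing property of the surviving pairs.

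Next I would show the equivalence: $w$ is a Catalan word if and only if $\pi(w)$ is non-crossing. For the ``Catalan $\Rightarrow$ non-crossing'' direction, reverse the sequential deletion: starting from $\emptyset$ (vacuously non-crossing) and inserting one adjacent pair at a time, the resulting pair partition stays non-crossing at every step, so $\pi(w)$ is non-crossing. For the converse, the key combinatorial fact is that every non-empty non-crossing pair partition contains an adjacent pair $\{i,i+1\}$: take $\{a,b\}$ with $b-a$ minimal; if $b-a\ge 2$, then $a+1$ is paired with some $c$, and the non-crossing condition forces $c\in(a+1,b)$, whence $c-(a+1)<b-a$ contradicts minimality. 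Hence $b=a+1$. Removing this adjacent pair yields a smaller non-crossing partition, and iterating drives $\pi(w)$ to $\emptyset$, so $w$ is Catalan.

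Finally, let $N_k$ denote the number of non-crossing pair partitions of $\{1,\ldots,2k\}$. Conditioning on the partner of $1$: by the non-crossing property, if $1$ pairs with $2j$ (the partner is necessarily at an even position so as not to split an odd interval), then the positions $\{2,\ldots,2j-1\}$ and $\{2j+1,\ldots,2k\}$ must each carry an independent non-crossing pair partition. This yields the recursion
\[
N_k = \sum_{j=1}^{k} N_{j-1} N_{k-j}, \qquad N_0 = 1,
\]
which is the classical Catalan recursion and uniquely determines $N_k=\tfrac{1}{k+1}\binom{2k}{k}=C_k$. The only genuinely non-trivial step in this plan is the adjacent-pair existence lemma for non-crossing pair partitions; the bijection and the recursion are bookkeeping.
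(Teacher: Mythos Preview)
Your argument is correct. The bijection between Catalan words and non-crossing pair partitions is set up cleanly, the adjacent-pair existence lemma (your minimality argument) is valid---one should perhaps note explicitly that $c<a$ is also excluded since $c<a<a+1<b$ with pairs $\{c,a+1\}$ and $\{a,b\}$ is a crossing, but this is implicit in your use of the non-crossing hypothesis---and the recursion for $N_k$ is the standard one.

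As for comparison with the paper: there is nothing to compare. The paper does not prove Lemma~\ref{Catalan}; it simply records it as a well-known fact and cites \cite[Lemma~3]{Cata} (Bose--Sen). Your write-up therefore supplies what the paper omits, and the route you take---identifying Catalan words with non-crossing pair partitions and then invoking the Catalan recursion---is exactly the classical one that underlies the cited reference.
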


Given a partition word $w$, denote $\mathcal{A}(w)$ the set of all the letters used in $w$.

A word is called a reduced word, if it does not contain any double letter.  If $w'$ is the word after sequentially deleting all double letters of $w$, then $w'$ is a reduced word and  we will call it the reduced word obtained from $w$. For example, by $(\ref{121332})$, $1212$ is the reduced word obtained from $121332$.

The next lemma gives the structure of the reduced word.

\begin{lemma}[The structure of the reduced words]\label{deleting}
	Given a partition word $w$, denote $w'$ the reduced word obtained from $w$, then $w'$ must be of one of the following forms:
	\begin{itemize}
		\item $w' = \emptyset$,
		\item $w' = w_{1}abw_{2}a$, where $w_{1} \neq \emptyset$, $w_{2}$ is possibly an empty word.
		\item $w' = w_{1}abw_{2}acw_{3}$, where $w_{1} \neq \emptyset$, $w_{2}, w_{3}$ are possibly empty words.
	\end{itemize}
	where $a,b,c$ are three distinct letters and $b,c \in \mathcal{A}(w_1)$.
\end{lemma}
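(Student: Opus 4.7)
The plan is to choose $a$ to be the letter whose first occurrence in $w'$ is as late as possible, and then read the desired decomposition off from this choice. If $w' = \emptyset$ there is nothing to prove. Otherwise, $w'$ is itself a reduced partition word: the sequential-deletion procedure only removes two adjacent occurrences of the same letter at once, so the property that every letter appears exactly twice is preserved, and by the definition of ``reduced'' there are no adjacent duplicates left. In particular $|w'| \geq 4$ and $w'$ contains at least two distinct letters.

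I would then let $a$ be the letter whose first-occurrence position $i_1$ in $w'$ is \emph{maximal} among all distinct letters of $w'$, and let $i_2$ be the position of its second occurrence. Since some other letter occurs strictly before $i_1$, we have $i_1 \geq 2$, so the prefix $w_1 := w'[1 \ldots i_1 - 1]$ is nonempty. Because $w'$ has no adjacent duplicates, $i_2 \geq i_1 + 2$, so $b := w'[i_1 + 1]$ is well-defined and $b \neq a$; by the extremal choice of $i_1$, the first occurrence of $b$ lies in $w_1$, hence $b \in \mathcal{A}(w_1)$.

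It remains to split according to whether $i_2$ is the final position of $w'$. If it is, setting $w_2 := w'[i_1 + 2 \ldots i_2 - 1]$ (possibly empty) yields $w' = w_1 a b w_2 a$, the second of the three forms. Otherwise, define $c := w'[i_2 + 1]$ and $w_3 := w'[i_2 + 2 \ldots]$; again $c \neq a$ by the no-adjacent-duplicates property, and the first occurrence of $c$ precedes $i_1$ by the extremality of $a$, so $c \in \mathcal{A}(w_1)$, giving the third form $w' = w_1 a b w_2 a c w_3$. Finally, $b \neq c$ in this case, since otherwise this common letter would have three occurrences in $w'$ (one inside $w_1$, one at $i_1 + 1$, one at $i_2 + 1$), contradicting the pair-partition property. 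The argument is elementary and I do not foresee a serious obstacle; the one conceptual step is the extremal choice of $a$, after which the rest is bookkeeping.
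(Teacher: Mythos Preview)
Your proposal is correct and follows essentially the same approach as the paper: both proofs pick $a$ to be the letter whose first occurrence in $w'$ comes last (the paper calls this the ``last new position''), and then read off the decomposition exactly as you do. Your write-up is in fact slightly more careful than the paper's, since you explicitly justify $w_1\neq\emptyset$ and $b\neq c$.
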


\begin{proof}
	In a word, we say a position is new if it is the first occurrence position of a letter.  Otherwise, we say the position is not new. 
	
	Let $w'$ be the reduced word obtained from the partition word $w$.  Assume that $w'\neq \emptyset$.  First we choose the last new position of the word $w'$ and denote the letter on this last new position by $a$.   Then the word $w'$ can be written as 
	\[
	w'= w_{1} a  \underbrace{\underbrace{\cdots}_{\text{non-empty}} a \underbrace{\cdots}_{\text{possibly empty}}}_{\text{no new positions}}\,\, \text{with $w_1\neq \emptyset$}.
	\] 
	Now let us concern about the adjacent letter on the right side of the first $a$, denote it as $b$. Since there is not any new position after the first $a$, there must be a $b$ in word $w_1$. Denote the word between the second $b$ and the second $a$ as $w_2$, so $w'$ can be written as 
	\[
	w' = w_{1}abw_{2}a\underbrace{\cdots}_{\text{possibly empty}},
	\]
	with $b \in \mathcal{A}(w_1)$ (the word $w_2$ is possibly an empty word). Now there are exactly two cases: 
	
	{\flushleft \it	Case 1:}
	the second $a$ is on the far right position. Then  
	\[
	w' = w_{1}abw_{2}a.
	\]
	{\flushleft\it		 Case 2:}
	the second $a$ is not on the far right position.  In this case, we denote the letter on the right side of the second $a$ as $c$. Similarly, $c \in \mathcal{A}(w_1)$, then 
	\[
	w' = w_{1}abw_{2}acw_{3}, 
	\]
	where the word $w_3$ is possibly an empty word. 
\end{proof}

\begin{corollary}[The structure of non-Catalan partition words]\label{non-cartalan word}
	Let $w$ be a non-Catalan partition word. Then $w$ must be of one of the following forms:
	\begin{itemize}
		\item $w= w_{1}aw_{2}bw_{3}aw_{4}$, where $a,b$ are two distinct letters, $b \in \mathcal{A}(w_1)$ and $w_{2}, w_{4}$ are Catalan words while $w_1$ is a non-Catalan word and $w_3$ is possibly a Catalan word.
		\item $w = w_{1}aw_{2}bw_{3}aw_{4}cw_{5}$, where $a,b,c$ are three distinct letters, $b,c \in \mathcal{A}(w_1)$ and $w_{2}, w_{4}$ are Catalan words while $w_1$ is a non-Catalan word and $w_3, w_5$ are possibly Catalan words.
	\end{itemize}
\end{corollary}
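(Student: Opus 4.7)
The plan is to lift the dichotomy from Lemma~\ref{deleting} (applied to the reduced word $w'$ of $w$) back up to the original word $w$. Since $w$ is non-Catalan, its reduced word $w'$ is non-empty, so by Lemma~\ref{deleting} it takes one of the two non-trivial forms $w' = w_1' a b w_2' a$ or $w' = w_1' a b w_2' a c w_3'$, with $w_1' \neq \emptyset$ and $b$ (respectively $b,c$) lying in $\mathcal{A}(w_1')$. Everything then comes down to understanding how the positions of $w'$ sit inside $w$.

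The key intermediate step is a structural description of $w$ in terms of $w'$: writing $w' = \alpha_1 \alpha_2 \cdots \alpha_{2m}$ as a sequence of letters, one has a decomposition
\[
w = v_0 \alpha_1 v_1 \alpha_2 v_2 \cdots \alpha_{2m} v_{2m},
\]
in which each gap $v_i$ is either empty or a Catalan word, and every letter appearing in a given $v_i$ has its pair inside the same $v_i$. I would establish this by a short pairing argument: if a letter in $v_i$ were matched with its twin in $v_j$ with $i<j$, then before the two can become adjacent during the reduction all letters strictly between them — in particular at least one $\alpha_k$ — must first be deleted, contradicting the fact that the $\alpha_k$'s survive into $w'$. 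Hence the pairings respect the $v_i$'s, and because all letters of $v_i$ are eventually eliminated by the reduction procedure, each $v_i$ is empty or Catalan in its own right.

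Granted this description, the corollary follows by regrouping. In Case A, I would set $w_1$ to be the initial segment of $w$ preceding the first occurrence of $a$, $w_2$ the single gap between the first $a$ and $b$, $w_3$ the segment from just after $b$ up to just before the second $a$, and $w_4$ the tail of $w$ after the second $a$. Then $w_1$ reduces to the non-empty word $w_1'$ and is therefore non-Catalan; $w_2$ and $w_4$ each coincide with a single gap $v_i$ and so are Catalan (allowing empty); $w_3$ reduces to the possibly empty word $w_2'$ and is thus either Catalan or non-Catalan, i.e.\ "possibly Catalan"; and $b \in \mathcal{A}(w_1') \subseteq \mathcal{A}(w_1)$ since $w_1'$ is the reduction of $w_1$. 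Case B is entirely analogous, with the extra letter $c$ producing the additional piece $w_5$. The only non-routine point is the intermediate block decomposition of $w$ along the surviving letters of $w'$; once that is in hand, the rest is bookkeeping.
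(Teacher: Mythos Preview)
Your proposal is correct and follows essentially the same route as the paper: reduce $w$ to $w'$, invoke Lemma~\ref{deleting} to put $w'$ in one of the two non-trivial forms, then reinsert the deleted Catalan blocks into the gaps of $w'$ and regroup. Your version is in fact more careful than the paper's, since you supply the pairing argument explaining why each gap $v_i$ is Catalan, whereas the paper simply asserts that one ``reverts $w$ by adding the Catalan words \ldots\ into all possible intervals of $w'$'' without justification.
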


\begin{proof}
	Let $w$ be a non-Catalan partition word.   Let $w'$  be the reduced word obtained from $w$. Then $w'\neq \emptyset$. By  Lemma~\ref{deleting}, the reduced word $w'$ must be of one of the following forms:
	{\flushleft \it Case 1:}  there exist two distinct letters $a, b$ such that 
	\[
	w' = w_{1}'abw_{2}'a  \an  b \in \mathcal{A}(w_1'). 
	\]
	
	{\flushleft\it    Case 2:} there exist three distinct letters $a, b, c$ such that 
	\[
	w' = w_{1}'abw_{2}'acw_{3}'  \an  b, c\in \mathcal{A}(w_1'). 
	\]

	Therefore,  we can revert $w$ by adding the Catalan words (which are deleted from the reducing procedure) into all possible intervals of $w'$ and conclude that $w$ must be of one of the following forms:
	
	Case 1 corresponds to
	\[
	w= \hat{w}_{0}w_{1}'\hat{w}_{1}a\hat{w}_{2}b\hat{w}_{3}w_{2}'\hat{w}_{4}a\hat{w}_{5},
	\]
	where $\hat{w}_0, \hat{w}_1, \cdots, \hat{w}_5$ are Catalan words. Rewrite the adjacent words to one word and rename it, then we can get 
	\[
	w= w_{1}aw_{2}bw_{3}aw_{4},
	\]
	where $a,b$ are two distinct letters, $b \in \mathcal{A}(w_1)$ and $w_{2}, w_{4}$ are Catalan words while $w_1$ is a non-Catalan word and $w_3$ is possibly a Catalan word.
	
	Case 2 corresponds to
	\[
	w = \hat{w}_{0}w_{1}'\hat{w}_{1}a\hat{w}_{2}b\hat{w}_{3}w_{2}'\hat{w}_{4}a\hat{w}_{5}c\hat{w}_{6}w_{3}'\hat{w}_{7},
	\]
	where $\hat{w}_0, \hat{w}_1, \cdots, \hat{w}_7$ are Catalan words. Rewrite the adjacent words to one word and rename it, then we get 
	\[
	w = w_{1}aw_{2}bw_{3}aw_{4}cw_{5},
	\]
	where $a,b,c$ are three distinct letters, $b,c \in \mathcal{A}(w_1)$ and $w_{2}, w_{4}$ are Catalan words while $w_1$ is a non-Catalan word and $w_3, w_5$ are possibly Catalan words.
\end{proof}

\subsection{System of equations associated to a partition word and a map $S$}\label{sec-equation}
In the situations of Hankel and Toeplitz random matrices in \cite{Hankel}, the associated system of equations plays an important role in computing the moment of the random Hankel and Toeplitz random matrices (namely, the  expectations of the trace of the powers of the corresponding random matrices).   

Following the same idea of  \cite{Hankel},  we introduce the system of equations for our general map $S$ which satisfies the $C$-condition as follows. 

Let $w[j]$ denote the letter in position $j$ of the word $w$. To every partition word $w$ of length $2k$ we associate the following system of equations in unknowns $x_{0},x_{1},\cdots,x_{2k}$:
\begin{equation}\label{equation}
	\begin{aligned}
		S(x_1,x_0) &= S(x_{m_1},x_{m_1 - 1}), \\
		& \quad \text{if } m_1 > 1 \text{ is such that } w[1] = w[m_1], \\
		S(x_2,x_1) &= S(x_{m_2},x_{m_2 - 1}), \\
		& \quad \text{if there is } m_2 > 2 \text{ such that } w[2] = w[m_2], \\
		& \vdotswithin{=} \\
		S(x_i,x_{i-1}) &= S(x_{m_i},x_{m_i - 1}), \\
		& \quad \text{if there is } m_i > i \text{ such that } w[i] = w[m_i], \\
		& \vdotswithin{=} \\
		S(x_{2k-1},x_{2k-2}) &= S(x_{2k},x_{2k-1}), \\
		& \quad \text{if } w[2k - 1] = w[2k],\\
		x_0 &= x_{2k}.
	\end{aligned}
\end{equation}

{\bf \flushleft An explanation of the equation system \eqref{equation}:} One easier way to obtain the equation system \eqref{equation} from a partition word $w$ is given as follows. 
\begin{itemize}
\item First, given a partition word $w$ of length $2k$: 
\[
w = w[1] w[2] \cdots w[2k], 
\]  we write the $(2k+1)$-variables $x_0, x_1, \cdots, x_{2k}$ in the spaces between the letters of the word: 
\[
 ^{x_0} w[1]^{x_1} w[2]^{x_2} \cdots ^{x_{2k-1}} w[2k]^{x_{2k}}. 
\]
 For example, writing the variables between the letters of the word 
 \begin{align}\label{w-exam}
 w = abbacdd e c e
 \end{align}
  we get
\begin{equation}\label{solve-equation}
	^{x_0} a^{x_1} b^{x_2} b^{x_3} a^{x_4} c^{x_5}  d^{x_6}d^{x_7}e^{x_8}c^{x_{9}}e^{x_{10}}
\end{equation}
\item  Then the equations (\ref{equation}) are formed by equating the $S$-value of the variables at the same letter: for each pair $(i, j)$ with $w[i] = w[j]$, we write an equation: 
\[
S(x_{i-1},  x_i)  =   S(x_{j-1}, x_j). 
\]
And finally, we add one more equation for the subsequent calculation of traces:
\[
x_0 = x_{2k}.
\]
\end{itemize}

\begin{example*}
For the partition word $w = abba$, using the above strategy, we write \[
^{x_0} a^{x_1} b^{x_2} b^{x_3} a^{x_4}
\]
And then we get the corresponding equation system:   
\begin{align*}
	S(x_0,x_1) &= S(x_3,x_4),\\
	S(x_1,x_2) &= S(x_2,x_3),\\
	x_0  &=  x_4.
\end{align*}
And for the slightly more complicated partition  word $w$ given in \eqref{w-exam}, by using \eqref{equation}, we obtain the following corresponding equation system:  
\begin{align*}
S(x_0, x_1) & = S(x_3, x_4), \\
S(x_1, x_2) & = S(x_2, x_3),\\
S(x_4, x_5) & = S(x_8, x_9),\\
S(x_5, x_6) & = S(x_6, x_7),\\
S(x_7, x_8) & = S(x_9, x_{10}),\\
x_0 & = x_{10}.
\end{align*}
\end{example*}

{\bf \flushleft The main role of the equation system \eqref{equation}:} We mainly focus on the number of solutions for \eqref{equation}. More precisely, we need to estimate
\[
\#\{(x_0,\cdots,x_{2k}) \in [n]^{2k+1}| x_0, \cdots,x_{2k}\,\, \text{satisfy the equations}\,\, (\ref{equation})\}.
\]

Since there are exactly $k$ distinct letters in a partition word of length $2k$, this is the system of $(k+1)$ equations in $(2k+1)$ unknowns. We solve it for the variables that precede the first occurrence of a letter (we will call them dependent variables), leaving us with $k$ undetermined variables $x_{\alpha_{1}},\ldots, x_{\alpha_{k}} = x_{2k-1}$ that precede the second occurrence of each letter, and with the $(k+1)$-th undetermined variable $x_{2k}$. We also take \eqref{w-exam} as an example, we write the variables between the letters of the word and get \eqref{solve-equation}. Then by definition, $x_0, x_1, x_4, x_5, x_7$ are dependent variables while $x_2, x_3, x_6, x_8, x_9, x_{10}$ are undetermined variables.

Indeed, undetermined variables are free variables. We claim that if $x_{0},\cdots,x_{2k}$ satisfy the $(k+1)$ equations in $(\ref{equation})$, then each $x_i$ has a unique expression in terms of $x_{\alpha_{1}},\cdots, x_{\alpha_{k}}, x_{2k}$:

Indeed, we write the variables between the letters of the word as the form of $(\ref{solve-equation})$ and solve the variables from right to left. Note that $x_{2k-1}, x_{2k}$ are both undetermined variables, we keep moving to left until we meet the first dependent variable, namely $x_{j}$. By our definition, it is a variable that precede the first occurence of some letter, namely $g$. Then there exists another $g$ on the right, hence the variables on both sides of the right $g$ are undetermined variables. Except for this, the variable on the right side of the left $g$ is also an undetermined variable, hence by the coordinatewise injectivity assumption, if there is a solution, then $x_j$ has a unique expression. We will repeat this procedure and the conclusion will be drawn. Hence we can denote 
\[
F_{i}(x_{\alpha_{1}},x_{\alpha_{2}},\cdots,x_{\alpha_{k}},x_{2k})
\]
the unique expression of the $i$-th dependent variable obtained by solving the equations (\ref{equation}), where $i=1,\cdots,k$. If the solution of the $i$-th dependent variable does not exist, we will denote $F_{i}(x_{\alpha_{1}},x_{\alpha_{2}},\cdots,x_{\alpha_{k}},x_{2k}) = -\infty$.

\subsection{Properties for circuits}\label{sec-cir}
For $k, n \in \mathbb{N}$, consider circuits in $\{1, 2, \ldots, n\}$ of length $L(\pi) = k$, that is, mappings $\pi \colon \{0, 1, \ldots, k\} \rightarrow \{1, 2, \ldots, n\}$, such that $\pi(0) = \pi(k)$. 

For a fixed map $S$ which satisfies the $C$-condition, we will say that circuit $\pi$ is $S$-matched, or has self-matched edges, if for every $1 \leq i \leq L(\pi)$ there is $j \neq i$ such that $S(\pi(i - 1), \pi(i)) = S(\pi(j - 1), \pi(j))$.

We will say that a circuit $\pi$ has an edge of order 3, if there are at least three different edges in $\pi$ with the same $S$-value. The following lemma says that generically self-matched circuits have only pair-matches.

\begin{lemma}[{ \cite[Proposition 4.2]{Hankel}}]\label{lem-3ord}
	Fix $r \in \mathbb{N}$. Let $N$ denote the number of $S$-matched circuits in $\{1, \ldots, n\}$ of length $r$ with at least one edge of order 3. Then there is a constant $\widetilde{C}_{r}$ such that
	\[N \leq \widetilde{C}_{r} n^{\lfloor (r+1)/2 \rfloor}.\]
	In particular, we have
	\[\frac{N}{n^{1+r/2}} \xrightarrow{n\to\infty} 0.\]
\end{lemma}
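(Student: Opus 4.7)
The plan is to stratify the $S$-matched circuits of length $r$ by the \emph{match pattern} they induce on their edge indices, and then bound each stratum by a walk along the circuit. To each circuit $\pi \colon \{0,1,\dots,r\} \to [n]$ I associate the partition $\mathcal{P}(\pi)$ of $\{1,\dots,r\}$ defined by $i \sim j$ iff $S(\pi(i-1),\pi(i)) = S(\pi(j-1),\pi(j))$. Being $S$-matched forces every block of $\mathcal{P}(\pi)$ to have size at least $2$, and having an edge of order $3$ forces at least one block of size at least $3$. The number of abstract set-partitions of $\{1,\dots,r\}$ meeting these two constraints is bounded by a constant $C_r$ depending only on $r$, so it is enough to bound, for each fixed such partition $P$, the number of circuits with $\mathcal{P}(\pi) = P$.

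Fix such a $P$ with blocks $B_1,\dots,B_d$, and let $b_\ell := \min B_\ell$. I walk $\pi(0),\pi(1),\dots,\pi(r)$ left to right. The vertex $\pi(0)$ admits at most $n$ choices. For each $i \in \{1,\dots,r\}$, there is a dichotomy: if $i = b_\ell$ for some $\ell$ (first occurrence of its block), the edge-value $S(\pi(i-1),\pi(i))$ is as yet unconstrained and $\pi(i) \in [n]$ may be chosen freely; otherwise $i > b_\ell$ for the block $B_\ell$ containing $i$, and the match equation
\[
S(\pi(i-1), \pi(i)) = S(\pi(b_\ell - 1), \pi(b_\ell))
\]
must hold, with its right-hand side already fixed and with $\pi(i-1)$ already determined from the previous step. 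By symmetry (C1), this equation is equivalent to $S(\pi(i), \pi(i-1)) = (\text{fixed value})$, and coordinatewise injectivity (C2) applied with $y = \pi(i-1)$ then forces $\pi(i)$ to be the unique preimage (if any). Hence the number of circuits with $\mathcal{P}(\pi) = P$ is at most $n \cdot n^d = n^{d+1}$; the closing constraint $\pi(0) = \pi(r)$ is an additional restriction that only decreases this.

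To conclude, since every block of $P$ has size $\geq 2$ and at least one has size $\geq 3$,
\[
r = \sum_{\ell=1}^d |B_\ell| \geq 2d + 1, \qquad \text{so} \qquad d \leq \lfloor (r-1)/2 \rfloor,
\]
which gives $n^{d+1} \leq n^{\lfloor (r+1)/2 \rfloor}$. Summing over the at most $C_r$ admissible partitions produces $N \leq \widetilde{C}_r \, n^{\lfloor (r+1)/2 \rfloor}$ with $\widetilde{C}_r := C_r$. The asymptotic $N/n^{1+r/2} \to 0$ is then immediate, because $\lfloor (r+1)/2 \rfloor < 1 + r/2$ for both parities of $r$. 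The main delicate point, and essentially the only place where real work occurs, is the unique-determination claim inside the walk: it is precisely here that both coordinatewise injectivity (C2) and the symmetry (C1) must be invoked, and without either of them the bound would degrade by a factor of $n$ per matched edge and collapse.
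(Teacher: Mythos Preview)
Your argument is correct. Note that the paper does not supply its own proof of this lemma: it is quoted verbatim from \cite[Proposition~4.2]{Hankel}, and your stratify-by-match-pattern-then-walk argument is exactly the approach used there, specialized to the present $C$-condition framework. The only adaptation is that in \cite{Hankel} the relevant $S$ need not be symmetric (e.g.\ $S(x,y)=x-y$ for Toeplitz), so the determination step there allows a bounded number of preimages rather than a unique one; under (C1)+(C2) you correctly get uniqueness, and the rest of the counting ($r\ge 2d+1\Rightarrow d+1\le\lfloor(r+1)/2\rfloor$) is identical.
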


We say that a set of circuits $\pi_1, \pi_2, \pi_3, \pi_4$ is matched if each edge of any one of these circuits is either self-matched, that is, there is another edge of the same circuit with equal $S$-value, or is cross-matched, that is, there is an edge of the other circuit with the same $S$-value (or both). The following bound will be used to prove almost sure convergence of moments.

\begin{lemma}[{\cite[Proposition 4.3]{Hankel}}]\label{quadruples} 
	Fix $r \in \mathbb{N}$. Let $N$ denote the number of matched quadruples of circuits in $\{1, \ldots, n\}$ of length $r$ such that none of them is self-matched. Then there is a constant $\widetilde{C}_{r}$ such that
	\[N \leq \widetilde{C}_{r} n^{2r+2}.\]
\end{lemma}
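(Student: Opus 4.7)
The plan is to count matched quadruples by grouping them according to the matching partition $\mathcal{M}$ of the $4r$ edges of $(\pi_1,\pi_2,\pi_3,\pi_4)$, and for each such $\mathcal{M}$ to bound the number of compatible vertex assignments via the variable-counting technique developed in subsection~\ref{sec-equation}.

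First I would associate to each $\mathcal{M}$ a \emph{matching graph} $G = G(\mathcal{M})$ on the vertex set $\{1,2,3,4\}$ by placing an edge between $i \ne j$ whenever some block of $\mathcal{M}$ contains an edge of $\pi_i$ and an edge of $\pi_j$. A circuit $\pi_i$ is self-matched precisely when $i$ is isolated in $G$, so the hypothesis that no circuit is self-matched forces every vertex of $G$ to have positive degree. With four vertices and no isolated vertex, every connected component of $G$ contains at least two vertices, and hence the number of components $c = c(\mathcal{M})$ satisfies $c \le 2$.

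Next I would count vertex assignments compatible with a fixed $\mathcal{M}$. There are $4r$ vertex variables; a block $B$ of $\mathcal{M}$ imposes $|B|-1$ equations of the form $S(x,y) = S(x',y')$, so the total number of equations is $4r - b(\mathcal{M})$, where $b(\mathcal{M})$ is the number of blocks. The key technical claim is that within each connected component of $G$, the equations contributed by blocks inside that component carry exactly one redundancy, the natural generalization to multi-circuit components of the closure-automaticity phenomenon for a single circuit recorded in subsection~\ref{sec-equation}. Granting this, coordinatewise injectivity and symmetry (conditions (C1)--(C2)) allow one to solve successively for all dependent vertices, and the number of free parameters is
\[
4r - \bigl((4r - b(\mathcal{M})) - c(\mathcal{M})\bigr) = b(\mathcal{M}) + c(\mathcal{M}) \le 2r + 2,
\]
since $b(\mathcal{M}) \le 2r$ with equality exactly for pair-matchings. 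Consequently at most $n^{2r+2}$ matched quadruples are compatible with any fixed $\mathcal{M}$, and summing over the finitely many $\mathcal{M}$ (the number depending only on $r$) yields $N \le \widetilde{C}_r\, n^{2r+2}$.

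The main obstacle is justifying the one-redundancy-per-component claim. For a single-circuit component this is exactly the content of subsection~\ref{sec-equation}, where the closure $x_0 = x_{2k}$ is automatically implied by the internal pair-match equations. For a component containing $k_C \ge 2$ circuits one must argue that, starting from any anchor vertex in the component and alternating between coordinatewise-injective inversion along circuit edges and ``jumping'' across cross-block identifications, one obtains a globally consistent assignment whose closure identities collapse to a single independent one. The propagation can be organized along a spanning tree of $G$ restricted to the component, combined with the internal edges of each circuit, and the case analysis parallels the structural dichotomy in Lemma~\ref{deleting} and Corollary~\ref{non-cartalan word}. A careful execution of this argument, together with the preceding counting, gives the stated bound.
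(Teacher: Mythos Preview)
The paper does not give its own proof of this lemma; it is imported verbatim from Bryc--Dembo--Jiang \cite[Proposition~4.3]{Hankel}, whose argument only uses the symmetry and coordinatewise injectivity hypotheses (C1)--(C2) and therefore transfers to the present setting unchanged. Your overall strategy---fix the block partition $\mathcal{M}$, build the matching graph $G$ on $\{1,2,3,4\}$, use the no-self-match hypothesis to force $c(\mathcal{M})\le 2$, and then bound the number of compatible vertex assignments by $n^{b(\mathcal{M})+c(\mathcal{M})}\le n^{2r+2}$---is exactly the skeleton of that argument.

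The genuine gap is the step you yourself flag: the ``one redundancy per component'' claim. First, the phrasing is misleading. The block equations $S(u,v)=S(u',v')$ are not linear, so ``redundancy'' has no intrinsic meaning; what you actually need is a \emph{triangular ordering} of $4r-b(\mathcal M)-c(\mathcal M)$ of these equations so that each determines one new vertex from already-known ones via (C2), leaving at most $b(\mathcal M)+c(\mathcal M)$ free vertices. Second, your proposed mechanism for producing such an ordering---``jumping across cross-block identifications''---does not work as stated: a block equation with both vertices of the earlier-circuit edge known constrains the \emph{pair} of vertices on the new-circuit edge but determines neither one individually, so the jump still costs a free anchor vertex in the new circuit and you have not explained where the saving of $4-c$ free vertices comes from. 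Third, the appeal to Lemma~\ref{deleting} and Corollary~\ref{non-cartalan word} is misplaced: those results describe the shape of reduced non-Catalan words and are used (together with the small-dimension hypothesis (C3)) to prove $p_n^S(w)\to 0$; they play no role in the purely combinatorial count here, which uses only (C1)--(C2). The BDJ argument fills this gap by choosing, for each circuit that is not first in its $G$-component, a starting position so that the last processed edge lies in an already-seen block and is the unique edge of that block in the current circuit; carrying this out carefully yields the required saving, but it is a concrete construction, not the abstract redundancy-count you sketch.
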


\section{Proofs of Theorems.}
The proofs of Theorem rely on the well-known relation
$$\int x^{k} d\widehat{\mu}(A)=\frac{1}{n} \Tr A^{k}$$
for an $n \times n$ symmetric matrix A.

\subsection{Reduced to bounded random variable situation}

\begin{proposition}\label{prop-TB}
	If Theorem~\ref{thm-main} holds true for a bounded real random variable $X$ with mean zero and variance 1, then it holds true for a real random variable $X$ with variance 1 and $\mathbb{E}[|X|^{2 + \varepsilon}] < \infty$ for some fixed $\varepsilon > 0$.  
\end{proposition}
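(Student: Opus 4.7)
The plan is to reduce the general case to the bounded, mean-zero, variance-one case in two steps: a centering step handled by the rank inequality \eqref{rank inequality}, then a truncation/rescaling step handled by the Lidskii trace inequality \eqref{lidskii}.

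\emph{Centering.} Write $\mu = \mathbb{E}[X]$. The difference $H_n(X) - H_n(X-\mu) = \mu J_n$, where $J_n$ denotes the $n\times n$ all-ones matrix, has rank at most one, so \eqref{rank inequality} gives $\|F^{H_n(X)/\sqrt{n}} - F^{H_n(X-\mu)/\sqrt{n}}\|_\infty \leq 1/n \to 0$. Since $\gamma_{sc}$ has a continuous distribution function, uniform (Kolmogorov) closeness implies weak convergence, and so we may assume $\mathbb{E}[X] = 0$ from now on.

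\emph{Truncation and rescaling.} For $b > 0$ set
\[
c_b := \mathbb{E}[X\mathbf{1}_{|X|\leq b}], \quad \sigma_b^2 := \Var(X\mathbf{1}_{|X|\leq b}), \quad \widetilde{X}^{(b)} := \frac{X\mathbf{1}_{|X|\leq b} - c_b}{\sigma_b},
\]
so that $\widetilde{X}^{(b)}$ is bounded with mean zero and variance one. By the assumed bounded case, for each fixed $b$ we have $\widehat{\mu}(H_n(\widetilde{X}^{(b)})/\sqrt{n}) \to \gamma_{sc}$ almost surely. Setting $N_n(k) := \#\{(i,j)\in [n]^2 : ij = k\}$ so that $\sum_k N_n(k) = n^2$, the trace bound \eqref{lidskii} gives
\[
d_{BL}^2\bigl(\widehat{\mu}(H_n(X)/\sqrt{n}),\,\widehat{\mu}(H_n(\widetilde{X}^{(b)})/\sqrt{n})\bigr) \leq \frac{1}{n^2}\sum_{k} N_n(k)\,(X_k - \widetilde{X}^{(b)}_k)^2 =: T_n(b),
\]
with $\mathbb{E}[T_n(b)] = \mathbb{E}[(X-\widetilde{X}^{(b)})^2] =: \eta_b \to 0$ as $b \to \infty$ by dominated convergence. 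By the triangle inequality, it then suffices to show that $\limsup_n T_n(b) \to 0$ almost surely as $b \to \infty$.

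\emph{The main obstacle} is the almost-sure upgrade of $T_n(b) \to \eta_b$ under only a $(2+\varepsilon)$-th moment, since $W := (X-\widetilde{X}^{(b)})^2$ may have infinite variance when $\varepsilon < 2$, ruling out a direct Chebyshev bound. I would handle this by combining the von Bahr--Esseen inequality at exponent $p = 1 + \varepsilon/2 \in (1,2]$ with the uniform divisor bound $\max_{k \leq n^2} d(k) = n^{o(1)}$ from \eqref{precise}. Since $N_n(k) \leq d(k)$ for $k \leq n^2$,
\[
\sum_k N_n(k)^p \leq \bigl(\max_k N_n(k)\bigr)^{p-1}\sum_k N_n(k) = n^{2+o(1)},
\]
and hence $\mathbb{E}|T_n(b)-\eta_b|^p \leq C_p \,\mathbb{E}|W-\eta_b|^p \cdot n^{-2(p-1)+o(1)}$. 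This exponent is summable along any geometric subsequence $n_r := \lceil (1+\delta)^r \rceil$, so Borel--Cantelli yields $T_{n_r}(b) \to \eta_b$ almost surely. Finally, $n^2 T_n(b) = \sum_{(i,j) \in [n]^2}(X_{ij}-\widetilde{X}^{(b)}_{ij})^2$ is a sum of nonnegative terms over an increasing index set, hence nondecreasing in $n$, so $T_n(b) \leq (n_{r+1}/n_r)^2 T_{n_{r+1}}(b)$ for $n_r \leq n \leq n_{r+1}$, which gives $\limsup_n T_n(b) \leq (1+\delta)^2 \eta_b$ almost surely. Intersecting the resulting almost-sure events over rational $\delta \downarrow 0$ and integer $b \to \infty$, and combining with the bounded-case a.s.\ convergence via the triangle inequality, completes the proof.
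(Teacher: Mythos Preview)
Your proof is correct, and the overall architecture (rank-one centering, then truncation controlled via Lidskii's trace bound) matches the paper's. The genuine difference lies in how you establish the almost-sure convergence of the truncation error $T_n(b)$. The paper writes $\frac{1}{n^2}\Tr((H_n-\sigma(u)\widehat H_n)^2)=\frac{1}{n^2}\sum_k m(k;n)\widetilde X_k^2$ and appeals to Teicher's double-array SLLN (Lemma~\ref{lem-Teicher}); verifying Teicher's hypothesis $\max_i |a_{n,i}| = O(k_n^{-1/p}(\log n)^{-1})$ with $k_n=|A_n|$ and $p=(2+\varepsilon)/2$ requires Ford's sharp asymptotic \eqref{A_n} for the size of the multiplication table, packaged into Lemma~\ref{lem-0}. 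You instead use the von Bahr--Esseen inequality at exponent $p=1+\varepsilon/2$, bound $\sum_k N_n(k)^p\le n^{2+o(1)}$ via the elementary divisor estimate \eqref{precise} alone, apply Borel--Cantelli along a geometric subsequence, and interpolate using the monotonicity of $n\mapsto n^2 T_n(b)$. Your route is more elementary---it avoids both Teicher's lemma and Ford's theorem entirely---at the cost of the extra subsequence/sandwiching step; the paper's route is cleaner once those two external inputs are granted. (One minor remark: your choice $p=1+\varepsilon/2\in(1,2]$ tacitly assumes $\varepsilon\le 2$, which is harmless since the hypothesis is monotone in $\varepsilon$.)
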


In our  proof of Proposition \ref{prop-TB},  we shall need the following lemma  on the sufficient conditions for almost sure convergence in double arrays.
\begin{lemma}[{\cite[Theorem 4, Corollary 3]{Teicher}}]\label{lem-Teicher}
	Let $\{X_n, n=1,2,\cdots\}$ be i.i.d.\,mean-zero random variables and let $\{a_{n, i}, 1 \leq i \leq k_n \uparrow \infty \}$ be a double array of constants. Assume that there exists $0 < p \leq 2$ such that
	\begin{enumerate}
		\item $\mathbb{E}[|X_1|^p] < \infty$,
		\item $\max_{1 \leq i \leq k_n} |a_{n,i}| = O\big( k_n^{-\frac{1}{p}}(\log n)^{-1}\big)$,
		\item $\log k_n = O(\log^2n)$.
	\end{enumerate}
	Then 
	\begin{equation*}
		\sum_{i=1}^{k_n} a_{n,i}X_i \xrightarrow[a.s.]{n\to\infty} 0.
	\end{equation*}
\end{lemma}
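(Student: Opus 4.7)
The plan is a standard truncation argument, comparing the spectral measure of $H_n$ with that of a Helson matrix built from a bounded approximant of $X$, and using Lidskii's inequality together with Lemma~\ref{lem-Teicher} to control the error.

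\textbf{Mean reduction.} If $\mu:=\E[X]\ne 0$, put $X':=X-\mu$, so that $\E[X']=0$ and $\Var(X')=1$. Since $H_n^X=H_n^{X'}+\mu J_n$, with $J_n$ the rank-one all-ones matrix, the rank inequality~(\ref{rank inequality}) gives $\|F^{H_n^X/\sqrt n}-F^{H_n^{X'}/\sqrt n}\|_\infty\le 1/n\to 0$, reducing the problem to the case $\E[X]=0$.

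\textbf{Truncation and triangle inequality.} Assume now $\E[X]=0$. For each $B>0$, set
\[
Y^B:=X\mathbf{1}_{|X|\le B}-\E[X\mathbf{1}_{|X|\le B}],\qquad Z^B:=X-Y^B,
\]
and write $\sigma_B^2:=\Var(Y^B)\to 1$ and $\tau_B^2:=\Var(Z^B)\to 0$ as $B\to\infty$. Since $Y^B/\sigma_B$ is bounded, mean-zero, and has unit variance, the hypothesis yields, for each fixed $B$, the almost-sure weak convergence $\widehat{\mu}(H_n^{Y^B}/(\sigma_B\sqrt n))\to\gamma_{sc}$. By the triangle inequality, $d_{BL}(\widehat{\mu}(H_n^X/\sqrt n),\gamma_{sc})$ is bounded by
\[
d_{BL}(\widehat{\mu}(H_n^X/\sqrt n),\widehat{\mu}(H_n^{Y^B}/\sqrt n))+d_{BL}(\widehat{\mu}(H_n^{Y^B}/\sqrt n),\widehat{\mu}(H_n^{Y^B}/(\sigma_B\sqrt n)))+d_{BL}(\widehat{\mu}(H_n^{Y^B}/(\sigma_B\sqrt n)),\gamma_{sc}).
\]
Lidskii's bound~(\ref{lidskii}) controls the first two summands by $\frac{1}{n^2}\sum_k N_k^{(n)}(Z_k^B)^2$ and $(1-\sigma_B^{-1})^2\cdot\frac{1}{n^2}\sum_k N_k^{(n)}(Y_k^B)^2$, respectively, where $N_k^{(n)}:=\#\{(i,j)\in[n]^2:ij=k\}$.

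\textbf{Key LLN step.} The heart of the proof is to establish, for each fixed $B$, the almost-sure convergences
\[
\frac{1}{n^2}\sum_{k\le n^2}N_k^{(n)}(Z_k^B)^2\to\tau_B^2\quad\text{and}\quad\frac{1}{n^2}\sum_{k\le n^2}N_k^{(n)}(Y_k^B)^2\to\sigma_B^2.
\]
I will deduce both from Lemma~\ref{lem-Teicher} applied to the iid centered sequences $(Z_k^B)^2-\tau_B^2$ (resp.\ $(Y_k^B)^2-\sigma_B^2$) with weights $a_{n,k}:=N_k^{(n)}/n^2$, $k_n=n^2$, and exponent $p=1+\varepsilon/2$. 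Condition~(1) of that lemma follows from $\E|X|^{2+\varepsilon}<\infty$ together with the bound $|Z_k^B|\le|X_k|+\E|X|$; condition~(3) is trivial, since $\log k_n=2\log n=o(\log^2 n)$. The critical condition~(2) is verified by combining $N_k^{(n)}\le d(k)$ with the divisor asymptotic~(\ref{precise}), yielding $\max_k a_{n,k}=O(n^{2/\log\log n^2-2})=o(n^{-4/(2+\varepsilon)}(\log n)^{-1})=o(k_n^{-1/p}(\log n)^{-1})$.

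\textbf{Diagonal limit.} Assembling the estimates, one obtains, almost surely and for every $B\in\N$, $\limsup_n d_{BL}(\widehat{\mu}(H_n^X/\sqrt n),\gamma_{sc})\le\tau_B+|\sigma_B-1|$. Intersecting the corresponding probability-one events over $B\in\N$ and letting $B\to\infty$ completes the proof. The principal obstacle is the verification of condition~(2) of Lemma~\ref{lem-Teicher}: because the truncated quadratics $(Z_k^B)^2$ only possess $(1+\varepsilon/2)$-th moments, a naive variance bound is insufficient, and the number-theoretic decay of the divisor function recorded in~(\ref{precise}) is essential to make the weights $a_{n,k}$ decay quickly enough.
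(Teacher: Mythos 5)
There is a genuine gap, and it is one of target rather than of detail: your proposal does not prove Lemma~\ref{lem-Teicher} at all. The statement to be established is Teicher's strong law for weighted sums in a double array — that under $\E[|X_1|^p]<\infty$, the weight bound $\max_i|a_{n,i}|=O\bigl(k_n^{-1/p}(\log n)^{-1}\bigr)$, and $\log k_n=o(\log^2 n)$, one has $\sum_{i=1}^{k_n}a_{n,i}X_i\to 0$ almost surely. What you have written instead is a truncation argument reducing Theorem~\ref{thm-main} to the bounded case (essentially the content of Proposition~\ref{prop-TB}), and in your ``Key LLN step'' you explicitly \emph{apply} Lemma~\ref{lem-Teicher} to the weights $a_{n,k}=N_k^{(n)}/n^2$. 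So with respect to the statement under review the argument is circular: the lemma is invoked as a black box and nothing in the proposal establishes it. (In the paper itself the lemma is not proved either; it is quoted from Teicher \cite{Teicher}, and your outline is in substance a re-derivation of the paper's proof of Proposition~\ref{prop-TB}, which is a different statement.)

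To actually prove the lemma you would need an argument of Teicher's type: truncate the $X_i$ at level $k_n^{1/p}$, use $\E[|X_1|^p]<\infty$ and Borel--Cantelli to dispose of the tail parts, and control the centered bounded parts by exponential (Prokhorov/Bernstein-type) inequalities, where the extra factor $(\log n)^{-1}$ in the weight bound and the growth restriction $\log k_n=o(\log^2 n)$ are exactly what make the resulting deviation probabilities summable in $n$; none of the number-theoretic or Lidskii-type estimates in your proposal plays a role there. If, on the other hand, your intention was to prove Proposition~\ref{prop-TB}, the outline is broadly sound and close to the paper's own argument, but then the verification of condition (2) should be done as in Lemma~\ref{lem-0}, using Ford's bound (\ref{A_n}) on $|A_n|$ together with (\ref{precise}) — note that the relevant index set has size $k_n=|A_n|$, not $n^2$, and the divisor bound must be combined with the size of $A_n$ to get the required $O\bigl(k_n^{-1/p}(\log n)^{-1}\bigr)$ decay.
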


The following lemma will be useful for us to complete the proof of Proposition~\ref{prop-TB}:
\begin{lemma}\label{lem-0}
For any $\varepsilon>0$, 
\[
  \lim_{n\to\infty}
  \frac{\max\limits_{1 \leq k \leq n^2} d(k)\,|A_n|^{\frac{2}{2+\varepsilon}}\,\log n}{n^2}
  =0.
\]
\end{lemma}

\begin{proof}
By the estimations~\eqref{d_n} and \eqref{A_n}, there exist constants $M_1>0$ and $M_{2} > 0$ such that for all $n\ge1$,
\begin{equation}\label{An-bound}
  |A_n|\le \frac{M_1\,n^2}{(\log n)^c\,(\log\log n)^{3/2}},
  \quad
\max\limits_{1 \leq k \leq n^2} d(k) \leq M_{2}n^{\frac{\varepsilon}{2+\varepsilon}}.
\end{equation}
Therefore, we can compute that
\begin{align*}
\lim_{n\to\infty}
  \frac{\max\limits_{1 \leq k \leq n^2} d(k)\,|A_n|^{\frac{2}{2+\varepsilon}}\,\log n}{n^2} &\leq \lim_{n\to\infty}\frac{M_{2}n^{\frac{\varepsilon}{2+\varepsilon}}\,\frac{M_{1}^{\frac{2}{2+\varepsilon}} n^{\frac{4}{2+\varepsilon}}}{(\log n)^{\frac{2c}{2+\varepsilon}} (\log \log n)^{\frac{3}{2+\varepsilon}}}\,\log n}{n^2}\\
  &= \lim_{n\to\infty} \frac{M_{1}^{\frac{2}{2+\varepsilon}}M_{2}(\log n)^{1 - \frac{2c}{2+\varepsilon}}}{n^{\frac{\varepsilon}{2+\varepsilon}}(\log \log n)^{\frac{3}{2+\varepsilon}}} = 0
\end{align*}
which completes the proof.
\end{proof}

\begin{proof}[Proof of Proposition \ref{prop-TB}]
	Without loss of generality, we may assume that $\mathbb{E}[X]=0$ in Theorem~\ref{thm-main}. Indeed, from the rank inequality (\ref{rank inequality}) it follows that substracting a rank-1 matrix of the mean $\mathbb{E}[X]$ from matrices $H_{n}$ does not affect the asymptotic distribution of the eigenvalues.
	
	For a fixed $u > 0$, denote
	\[
	m(u) = \mathbb{E}[X_1 \indi_{\{|X_1|>u\}}],
	\] and let \[\sigma^2(u) = \mathbb{E}[X_1^2 \indi_{\{|X_1| \leq u\}}] - m^2(u).\] Clearly, $\sigma^2(u) \leq 1$ and since $\mathbb{E}[X_1] = 0$, $\mathbb{E}[X_1^2] = 1$, we have 
	\begin{align}\label{mu-sigmau}
	\text{$m(u) \rightarrow 0$ and $\sigma(u) \rightarrow 1$ as $u \rightarrow \infty$}.
	\end{align}
	 Let\[\widetilde{X}_1 = X_1 \indi_{\{|X_1|>u\}} - m(u).\] Notice that $\sigma^2(u) = \mathbb{E}[(X_1 - \widetilde{X}_1)^2]$, therefore the bounded random variable\[\widehat{X}_{1} = \frac{X_1 - \widetilde{X}_1}{\sigma(u)}\] has mean zero and variance 1. Denote by $\widehat{H}_{n}$ the corresponding random $n \times n$ Helson matrix constructed from the independent bounded random variables \[\widehat{X}_j := \frac{X_j - \widetilde{X}_j}{\sigma(u)}\] distributed as $\widehat{X}_1$. By the triangle inequality for $d_{BL}(\cdot, \cdot)$ and (\ref{lidskii}),
		\begin{align}\label{dbl-trunc}
		\begin{split}
			d_{BL}^2(\widehat{\mu}(H_n / \sqrt{n}), \widehat{\mu}(\widehat{H}_n / \sqrt{n})) \leq  & 2d_{BL}^2(\widehat{\mu}(H_n / \sqrt{n}), \widehat{\mu}(\sigma(u) \widehat{H}_n / \sqrt{n})) \\
			& \quad + 2d_{BL}^2(\widehat{\mu}(\widehat{H}_n / \sqrt{n}), \widehat{\mu}(\sigma(u) \widehat{H}_n / \sqrt{n})) \\
			\leq&  \frac{2}{n^2} \Tr((H_n - \sigma(u)\widehat{H}_n)^2)\\ &+ \frac{2}{n^2} (1 - \sigma(u))^2 \Tr(\widehat{H}_{n}^2)
			\end{split}
	\end{align}
	and
	\begin{align*}
		\frac{1}{n^2} \Tr((H_n - \sigma(u)\widehat{H}_n)^2) = \frac{1}{n^2} \sum_{1 \leq i,j \leq n} \widetilde{X}_{ij}^{2} = \frac{1}{n^2} \sum_{k \in A_n} m(k;n) \widetilde{X}_{k}^{2}.
	\end{align*}
	By simple calculation, we  get 
\[
	\mathbb{E}[\widetilde{X}_{1}^{2}] = 1 - \sigma^2(u) - 2m(u)^2.
\]
	and hence, by \eqref{mu-sigmau}, 
		\begin{align}\label{X-tilde-0}
	\mathbb{E}[\widetilde{X}_{1}^{2}] \to 0  \quad \text{as $u\to\infty$}.	
	\end{align}
Introduce the mapping
\[
  f: A_{n} \longrightarrow \{1,2,\dots,|A_n|\}
\]
which arranges the elements of \(A_n\) in increasing order and labels them consecutively. Then \(f\) is a bijection and
\[
  \frac{1}{n^2}\sum_{k \in A_{n}} m(k;n)\,\widetilde X_k^2
  \;=\;
  \frac{1}{n^2}\sum_{k = 1}^{|A_{n}|} m\bigl(f^{-1}(k);n\bigr)\,\widetilde X_{f^{-1}(k)}^2.
\]
Denote $Y_{k} = \widetilde X_{f^{-1}(k)}^2$, then we will have
\[
\frac{1}{n^2}\sum_{k \in A_{n}} m(k;n)\,\widetilde X_k^2 =  \frac{1}{n^2}\sum_{k = 1}^{|A_{n}|} m\bigl(f^{-1}(k);n\bigr)\,Y_k
\]
Next we verify the conditions of Lemma~\ref{lem-Teicher}. Take
\[
  k_n = |A_n|,\quad
  a_{n,k} = \frac{m\bigl(f^{-1}(k);n\bigr)}{n^2},\quad
  X_n = Y_n - \mathbb{E}[Y_n],\quad
  p = \frac{2+\varepsilon}{2}.
\]
From \eqref{precise} and \eqref{A_n} we know
\[
  \log|A_n| = O(\log n) = O\bigl((\log n)^2\bigr)
  \quad\text{as }n\to\infty.
\]
Moreover, by Lemma~\ref{lem-0}, as \(n\to\infty\),
\begin{align}\label{upper-bound-m}
\begin{split}
  \max_{1\le k\le |A_n|}\,\frac{m(f^{-1}(k);n)}{n^2}
  &\le
  \max_{1\le k\le |A_n|}\,\frac{d\bigl(f^{-1}(k)\bigr)}{n^2}\\
  &\leq \max_{1\le k\le n^2}\,\frac{d(k)}{n^2}
   =O\!\Bigl(\frac{1}{|A_n|^{2/(2+\varepsilon)}\;\log n}\Bigr).
\end{split}
\end{align}
Hence we have
\[
\frac{1}{n^2} \sum_{k=1}^{|A_n|}m(f^{-1}(k); n) (Y_k- \E[Y_k])  \xrightarrow[n\to\infty]{\mathrm{a.s.}} 0.
\]
Together with 
\[
\frac{1}{n^2}\sum_{k=1}^{|A_n|}m(f^{-1}(k);n) = 1,
\]
we have
\begin{equation}\label{a.s.}
  \frac{1}{n^2}\Tr\!\bigl((H_n-\sigma(u)\,\widehat H_n)^2\bigr)
  \xrightarrow[n\to\infty]{\mathrm{a.s.}}
  \mathbb{E}[\widetilde X_1^2].
\end{equation}
Similarly one shows
\begin{equation}\label{a.s.1}
  \frac{1}{n^2}\Tr\!\bigl(\widehat H_n^2\bigr)
  \xrightarrow[n\to\infty]{\mathrm{a.s.}}
  \mathbb{E}[\widehat X_1^2].
\end{equation}
For large $u$, both $m(u)$ and $1-\sigma(u)$ are arbitrarily small. So, in view of   \eqref{mu-sigmau},   \eqref{dbl-trunc},  \eqref{X-tilde-0}, \eqref{a.s.} and \eqref{a.s.1}, with probability 1 the limiting distance in the bounded Lipschitz metric $d_{BL}$ between $\widehat{\mu}(H_{n} / \sqrt{n})$ and $\widehat{\mu}(\widehat{H}_{n} / \sqrt{n})$ is arbitrarily small, for all $u$ sufficiently large. More precisely, we have 
\begin{align}\label{dbl-to-zero}
\limsup_{u\to\infty}\limsup_{n \to\infty}    d_{BL}(\widehat{\mu}(H_n / \sqrt{n}), \widehat{\mu}(\widehat{H}_n / \sqrt{n})) =0, a.s. 
\end{align}
Thus, if the conclusion of Theorem~\ref{thm-main} holds true for all sequences of independent bounded random variables $\{\widehat{X}_j\}$, with the same limiting distribution $\gamma_{sc}$, then $\widehat{\mu}(H_n / \sqrt{n})$ must have the same weak limit with probability 1.
\end{proof}

\subsection{Moments of the average spectral measure}
\begin{proposition}\label{moments}
	Assume that $S$ is a map satisfying the $C$-condition. Let $X$ be an arbitrary bounded real random variable such that $\mathbb{E}[X]=0$ and $\Var(X)=1$. Let $H_{n}^{S}$ be the corresponding random $n \times n$ $S$-patterned matrix. Then for $k \in \mathbb{N}$,
	\begin{equation}\label{even}
		\lim_{n \to \infty} \frac{1}{n^{k+1}} \mathbb{E}\left[\Tr((H_n^{S})^{2k})\right] = \frac{(2k)!}{(k+1)!k!},
	\end{equation}
	and
	\begin{equation}\label{odd}
		\lim_{n \to \infty} \frac{1}{n^{k+1/2}} \mathbb{E}\left[\Tr((H_n^{S})^{2k-1})\right] = 0.
	\end{equation}
\end{proposition}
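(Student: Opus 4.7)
The plan is to apply the classical moment method. I begin by expanding
\[
\Tr\bigl((H_n^S)^{2k}\bigr) = \sum_{\pi} \prod_{i=1}^{2k} X_{S(\pi(i-1),\pi(i))},
\]
where the sum is over circuits $\pi:\{0,1,\ldots,2k\}\to[n]$ with $\pi(0)=\pi(2k)$. Since $X$ is bounded, mean-zero, and the $X_i$ are independent, the expectation of a product vanishes unless $\pi$ is $S$-matched. By Lemma~\ref{lem-3ord} with $r=2k$, the number of $S$-matched circuits containing an edge of order at least three is $O(n^k)$, and boundedness of $X$ makes their contribution to the normalized expectation $O(1/n)$, hence negligible. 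It therefore suffices to analyze pair-matched circuits, each corresponding to a partition word $w$ of length $2k$ and contributing exactly $(\mathbb{E}[X^2])^k=1$ to the expectation.

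For each partition word $w$ the count of pair-matched circuits equals the number of solutions $(x_0,\ldots,x_{2k})\in[n]^{2k+1}$ with $x_0=x_{2k}$ of the system introduced in Section~\ref{sec-equation}. When $w$ is a Catalan word, I proceed by induction on $k$ along the sequential deleting procedure: an adjacent double letter $aa$ at positions $(i,i+1)$ yields $S(x_{i-1},x_i)=S(x_i,x_{i+1})$, which by symmetry (C1) rewrites as $S(x_{i-1},x_i)=S(x_{i+1},x_i)$, and then coordinatewise injectivity (C2) forces $x_{i-1}=x_{i+1}$. Removing the double letter reduces $w$ to a Catalan word of length $2(k-1)$ while $x_i$ becomes an unconstrained free parameter. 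Iterating to the empty word shows the number of solutions to be $n^{k+1}+o(n^{k+1})$. By Lemma~\ref{Catalan} there are exactly $C_k=\frac{(2k)!}{(k+1)!k!}$ Catalan words, so their total contribution to $\frac{1}{n^{k+1}}\mathbb{E}[\Tr((H_n^S)^{2k})]$ tends to $C_k$, as required by \eqref{even}.

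The main technical obstacle is showing that every non-Catalan partition word contributes $o(n^{k+1})$ circuits; this is exactly where the small dimension assumption (C3) becomes essential. Using Corollary~\ref{non-cartalan word} I write any non-Catalan $w$ as $w_1 a w_2 b w_3 a w_4$ or $w_1 a w_2 b w_3 a w_4 c w_5$, with $b,c\in\mathcal{A}(w_1)$. The crossing letter $b$ (and possibly $c$) produces a pair-matching equation $S(x,y)=S(z,w)$ between positions whose variables are not identified by Catalan peeling in the sub-blocks. Bounding the number of admissible quadruples by $\sum_{i\in I}|S_i^{(n)}|^2=o(n^3)$ and combining with the Catalan count on the remaining blocks, one gains at least one factor of $n$ against $n^{k+1}$. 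Since the number of length-$2k$ partition words is bounded by a constant depending only on $k$, summing over the non-Catalan ones yields a total contribution of $o(n^{k+1})$, completing the proof of \eqref{even}.

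Finally, the odd moment \eqref{odd} is immediate: a circuit of odd length $2k-1$ cannot be purely pair-matched, so any $S$-matched circuit of this length must contain an edge of order at least three. Lemma~\ref{lem-3ord} with $r=2k-1$ bounds the number of such circuits by $O(n^k)$, and division by $n^{k+1/2}$ gives $O(n^{-1/2})\to 0$, as claimed.
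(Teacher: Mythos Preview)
Your plan follows the paper's strategy almost exactly: expand the trace over circuits, use independence and mean zero to restrict to $S$-matched circuits, invoke Lemma~\ref{lem-3ord} to discard circuits with an edge of order $\ge 3$ (this handles the odd case and reduces the even case to pair-matched circuits), index pair-matched circuits by partition words, and split into Catalan versus non-Catalan. Your Catalan argument via sequential deletion and (C1)--(C2) is precisely the content of Lemma~\ref{p_n(w)} in the paper (where in fact $p_n^S(w)=1$ exactly, not merely $1+o(1)$), and your odd-moment argument is identical to the paper's.

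The one place your sketch needs repair is the non-Catalan bound. You invoke Corollary~\ref{non-cartalan word}, which writes $w=w_1aw_2bw_3aw_4$ (or the longer form) with $w_2,w_4$ Catalan but $w_1$ \emph{non-Catalan} and $w_3,w_5$ only \emph{possibly} Catalan; hence your phrase ``combining with the Catalan count on the remaining blocks'' does not apply to $w_1$ (nor necessarily to $w_3,w_5$), and it is not clear from this decomposition alone that the four variables entering the crossing equation are genuinely free. The paper circumvents this by first establishing the reduction Lemma~\ref{p_n(w)}, $p_n^S(w)=p_n^S(w')$, and then working with the \emph{reduced} word $w'$ and the simpler structural Lemma~\ref{deleting}. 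In $w'$ one sees directly that the two occurrences of the last new letter $a$ are flanked by three undetermined variables and one dependent variable, so the single equation coming from $a$ (not from $b$, as you wrote) yields
\[
p_n^S(w')\le \frac{\#\{(x,y,z,t)\in[n]^4:S(x,y)=S(z,t)\}}{n^3}=o(1)
\]
by (C3). This is Lemma~\ref{noncartalan p_n(w)}. Once you insert this reduction step, your argument becomes the paper's.
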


\begin{proof}
	For a circuit $\pi \colon \{0, 1, \ldots, r\} \to \{1, 2, \ldots, n\}$ write
	\begin{equation}\label{X_{pi}}
		X_{\pi} = \prod_{i=1}^{r} X_{S(\pi(i-1),\pi(i))}.
	\end{equation}
	Then,
	\begin{align}\label{trace}
		\begin{split}
			\mathbb{E}[\Tr((H_n^{S})^{r})] 
			&= \sum_{i_1,\cdots,i_{r}=1}^{n} \mathbb{E}[H_{n}^{S}(i_1,i_2)H_{n}^{S}(i_2,i_3) \cdots H_{n}^{S}(i_r,i_1)]\\
			&=\sum_{i_1,\cdots,i_{r}=1}^{n} \mathbb{E}[X_{S(i_1,i_2)} X_{S(i_2,i_3)} \cdots X_{S(i_r,i_1)}]\\ 
			&= \quad \sum_{\pi} \mathbb{E}[X_{S(\pi(0),\pi(1))}X_{S(\pi(1),\pi(2))} \cdots X_{S(\pi(r-1),\pi(r))}]\\ 
			&= \quad \sum_{\pi} \mathbb{E}[X_{\pi}].
		\end{split}
	\end{align}
	where the sum is over all circuits in $\{1, \ldots, n\}$ of length $r$, and by H\"older's inequality, for any finite set $\Pi$ of circuits of length $r$
	\begin{equation}\label{holder}
		\Big| \sum_{\pi \in \Pi} \mathbb{E}[X_{\pi}] \Big| \leq \mathbb{E}[|X|^r]\# \Pi.
	\end{equation}
	With the random variables $X_i \, (i \in I)$ independent and of mean zero, the term $\mathbb{E}[X_{\pi}]$ vanishes for every circuit $\pi$ with at least one unpaired $X_i$. By (\ref{X_{pi}}), in the current setting paired variables correspond to an $S$-matching in the circuit $\pi$. Hence, only $S$-matched circuits can make a nonzero contribution to (\ref{trace}). 
	
	If $r = 2k - 1 > 0$ is odd, then each $S$-matched circuit $\pi$ of length $r$ must have an edge of order 3. From (\ref{holder}) and Lemma~\ref{lem-3ord} we get $\left| \mathbb{E}[\Tr((H_n^{S})^{2k-1})] \right| \leq C n^{k}$, proving (\ref{odd}).
	
	When $r = 2k$ is an even number, let $\Pi^{S}(k,n)$ be the set of all $S$-matched circuits $\pi \colon \{0, 1, \ldots, 2k\} \rightarrow \{1, \ldots, n\}$ consisting of $k$ distinct pairs. Recall that $\mathbb{E}X_{\pi} = 1$ for any $\pi \in \Pi^{S}(k,n)$ [see (\ref{X_{pi}})]. Further, with any $S$-matched circuit not in $\Pi^{S}(k,n)$ having an edge of order 3, it follows from (\ref{holder}) and Lemma~\ref{lem-3ord} that
	\[
	\lim_{n \to \infty} \frac{1}{n^{k+1}} \left| \mathbb{E}[\Tr((H_n^{S})^r)] - \# \Pi^{S}(k,n) \right| = 0.
	\]
	Label the circuits in $\Pi^{S}(k,n)$ by the partition words $w$ which list the positions of the pairs of $S$-matches along $\{1, \ldots, 2k\}$, with the corresponding partition $\Pi^{S}(k,n) = \bigsqcup_{w} \Pi^{S}(k,n,w)$ into equivalence classes. 
	Recall that in \S\ref{sec-equation}
	\[
	F_{i}(x_{\alpha_{1}},x_{\alpha_{2}},\cdots,x_{\alpha_{k}},x_{2k})
	\]
	is the unique expression of the $i$-th dependent variable obtained by solving the equations (\ref{equation}), and if the solution of the $i$-th dependent variable does not exist, we will denote $F_{i}(x_{\alpha_{1}},x_{\alpha_{2}},\cdots,x_{\alpha_{k}},x_{2k}) = -\infty$.
	To every such partition word $w$, denote
	\begin{align*}
		& p_{n}^{S}(w) 
		= \frac{\#\{(x_0,\cdots,x_{2k}) \in [n]^{2k+1}| x_0, \cdots,x_{2k}\,\, \text{satisfy the equations}\,\, (\ref{equation})\}}{n^{k+1}}\\
		&= \frac{\#\{(x_{\alpha_{1}},x_{\alpha_{2}},\cdots,x_{\alpha_{k}},x_{2k}) \in [n]^{k+1}|F_{i}(x_{\alpha_{1}},x_{\alpha_{2}},\cdots,x_{\alpha_{k}},x_{2k}) \in [n], i=1,2,\cdots,k\}}{n^{k+1}}.
	\end{align*}
	Note that the numerator of $p_{n}^{s}(w)$ is the number of $S$-matched circuits in $\{1,\cdots,n\}$ of length $r$ and $\#\Pi^{S}(k,n,w)$ is the number of all not only $S$-matched but also pair matched circuits in $\{1,\cdots,n\}$ of length $r$. Hence by Lemma~\ref{lem-3ord},
	\begin{equation}\label{pns}
		\lim_{n \to \infty} p_n^{S}(w) = \lim_{n \to \infty} \frac{\#\Pi^{S}(k,n,w)}{n^{k+1}}.
	\end{equation}
	The following lemma will complete the proof of (\ref{even}), and with it, that of proposition~\ref{moments}.
\end{proof}

\begin{lemma}\label{key}
	Denote $\Pi^{S}(k,n)$ the set of all $S$-matched circuits $\pi \colon \{0, 1, \ldots, 2k\} \rightarrow \{1, \ldots, n\}$ consisting of $k$ distinct pairs. Then the cardinality of the set $\Pi^{S}(k,n)$ satisfies
	\[
	\lim_{n \to \infty} \frac{1}{n^{k+1}} \# \Pi^{S}(k,n) = \frac{(2k)!}{(k+1)!k!}
	.\]
\end{lemma}

Lemma \ref{key} will be proved in  \S \ref{sec-key-lem}. 

\subsection{Some key lemmas}\label{sec-key-lem}
We will introduce the key lemmas in our article which will play an important role in the proof of Lemma~\ref{key}.
\begin{lemma}\label{p_n(w)}
	Given a partition word $w$, denote $w'$ the reduced word obtained from $w$. Then $p_n^{S}(w) = p_n^{S}(w')$, Specially, $p_n^{S}(\emptyset) = 1$.
\end{lemma}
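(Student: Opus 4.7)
My plan is to prove the identity by induction on the number of iterations of the sequential-deletion procedure required to pass from $w$ to its reduced form $w'$. If no iterations are needed then $w = w'$ and the claim is immediate; the special case $p_n^{S}(\emptyset) = 1$ is handled directly from the definition, since for $k=0$ the only constraint in (\ref{equation}) is the cyclic relation $x_0 = x_0$, so all $n$ values of $x_0 \in [n]$ are admissible and the ratio equals $n/n = 1$.

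For the inductive step, suppose $w$ contains a double letter $bb$ at some positions $i,\,i+1$. Because $w$ is a partition word, the letter $b$ appears nowhere else in $w$, and consequently the variable $x_i$ between the two $b$'s appears in no other pair-matching equation of the system (\ref{equation}). The equation produced by this pair,
\[
S(x_{i-1}, x_i) = S(x_i, x_{i+1}),
\]
becomes, after rewriting the right-hand side as $S(x_{i+1}, x_i)$ via the symmetry (C1) and invoking the coordinatewise injectivity (C2) with second coordinate fixed at $x_i$, equivalent to $x_{i-1} = x_{i+1}$ regardless of the value of $x_i$.

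Let $w''$ denote the word obtained by deleting the $bb$ pair from $w$. The key step is to exhibit an explicit bijection between the set of solutions of the $w$-system in $[n]^{2k+1}$ and the Cartesian product of the set of solutions of the $w''$-system in $[n]^{2k-1}$ with $[n]$ (the latter factor parameterizing the free variable $x_i$), via the obvious relabeling $y_j = x_j$ for $j \le i-1$ and $y_j = x_{j+2}$ for $j \ge i$, where the identity $x_{i-1} = x_{i+1}$ ensures that $y_{i-1}$ is consistently defined. This bijection gives $\#\{w\text{-sols}\} = n \cdot \#\{w''\text{-sols}\}$, and dividing by the normalizations $n^{k+1}$ and $n^{k}$ respectively yields $p_n^{S}(w) = p_n^{S}(w'')$. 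Iterating along the sequential-deletion procedure produces $p_n^{S}(w) = p_n^{S}(w')$.

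The main point to verify carefully is that, under the relabeling and the substitution $x_{i-1} = x_{i+1}$, each remaining pair-matching equation of $w$ translates into precisely the corresponding pair-matching equation of $w''$. Since the letter $b$ is localized to positions $i,\,i+1$ and $x_i$ appears in no other equation, the only change to the system of equations is the removal of the $bb$-equation itself, so this verification is routine book-keeping and presents no substantive obstacle beyond the initial application of (C1) and (C2).
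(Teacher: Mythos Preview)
Your proposal is correct and takes essentially the same approach as the paper. Both arguments reduce to the one-step claim $p_n^{S}(w_1 b b w_2)=p_n^{S}(w_1 w_2)$: the double-letter equation $S(x_{i-1},x_i)=S(x_i,x_{i+1})$ forces $x_{i-1}=x_{i+1}$ via (C1)+(C2), leaving the middle variable $x_i$ free to range over $[n]$, which produces exactly the factor $n$ needed to match the normalizations; your explicit bijection is just a cleaner packaging of the paper's observation that ``$x_{\alpha+2}$ is an undetermined variable, hence $M_1=nM_2$.''
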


\begin{proof}
	It suffices to prove
	\[
	p_n^{S}(w_{1}aaw_{2}) = p_n^{S}(w_{1}w_{2}),
	\]
	where $w =w_{1}aaw_{2}$ is a partition word of length $|w| = 2k$. Denote  the letter in $w_1$ which is next to letter $a$ as $b$ while the letter in $w_2$ which is next to letter $a$ as $c$. Writing the variables between the letters of the word, suppose it has the form:
	\[
	\cdots ^{x_{\alpha}}b^{x_{\alpha + 1}}a^{x_{\alpha + 2}}a^{x_{\alpha + 3}}c^{x_{\alpha + 4} \cdots}.
	\]
	Consider the equation of $w_{1}aaw_{2}$ determined by (\ref{equation}) which involves $x_{\alpha+1}, x_{\alpha+2}, x_{\alpha+3}$:
	\begin{equation*}
		S(x_{\alpha+1}, x_{\alpha+2}) = S(x_{\alpha+2}, x_{\alpha+3}),
	\end{equation*}
	which implies $x_{\alpha+1} = x_{\alpha+3}$ by the coordinatewise injectivity of the map $S$. Suppose that 
	\[
	p_n^{S}(w_{1}aaw_{2}) = \frac{M_1}{n^{k+1}}
	\]
	and
	\[
	p_n^{S}(w_{1}w_{2}) = \frac{M_2}{n^k}.
	\]
	Note that $x_{\alpha + 2}$ is an undetermined variable, hence $x_{\alpha+1} = x_{\alpha+3}$ implies $M_1 = nM_2$. Therefore,
	\[
	p_n^{S}(w_{1}aaw_{2}) = p_n^{S}(w_{1}w_{2}).
	\] 
\end{proof}

\begin{lemma}\label{noncartalan p_n(w)}
	For any non-Catalan partition word $w$, we have 
	\[
	\lim_{n \to \infty} p_n^{S}(w) = 0.
	\]
\end{lemma}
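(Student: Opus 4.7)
My plan is to reduce to a nonempty reduced word via Lemma~\ref{p_n(w)} and then apply the small dimension assumption (C3) to one key $S$-equation, combined with the right-to-left solving algorithm of \S\ref{sec-equation} for the remaining equations.

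By Lemma~\ref{p_n(w)}, $p_n^S(w) = p_n^S(w')$, where $w'$ is the reduced word obtained from $w$. Since $w$ is non-Catalan, $w' \neq \emptyset$; write $|w'| = 2k'$ with $k' \geq 2$ (no reduced partition word has length $2$). By Lemma~\ref{deleting}, $w'$ falls into one of two structural cases: (I) $w' = w_1' a b w_2' a$, or (II) $w' = w_1' a b w_2' a c w_3'$, with $w_1' \neq \emptyset$ and $b$ (and, in case (II), also $c$) in $\mathcal{A}(w_1')$. In either case, the letter $a$ appears at two positions $p < q$ with $q \geq p + 2$, so the four variables $x_{p-1}, x_p, x_{q-1}, x_q$ in the $a$-equation $S(x_{p-1}, x_p) = S(x_{q-1}, x_q)$ are pairwise distinct.

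A direct inspection of the structure of $w'$ shows that, in both cases, among the four variables $\{x_{p-1}, x_p, x_{q-1}, x_q\}$, exactly one, namely $x_{p-1}$, is a dependent variable (it precedes the first occurrence of $a$), while the other three, $x_p$, $x_{q-1}$, $x_q$, are undetermined variables in the algorithm of \S\ref{sec-equation}: $x_p$ precedes the second occurrence of $b$ (since $b \in \mathcal{A}(w_1')$), $x_{q-1}$ precedes the second occurrence of $a$, and $x_q$ is either the extra undetermined variable $x_{2k'}$ (case (I)) or precedes the second occurrence of $c$ (case (II)). By the small dimension assumption (C3),
\[
\#\{(x_{p-1}, x_p, x_{q-1}, x_q) \in [n]^4 : S(x_{p-1}, x_p) = S(x_{q-1}, x_q)\} = \sum_{i \in I} |S_i^{(n)}|^2 = o(n^3).
\]
Since the $a$-equation accounts for three of the $k'+1$ undetermined variables, the remaining $k' - 2$ undetermined variables may be chosen freely in $[n]$, contributing at most $n^{k'-2}$. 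The remaining $k' - 1$ dependent variables are then uniquely determined by their respective $S$-equations, using the coordinatewise injectivity (C2) together with the symmetry (C1), exactly as in the right-to-left algorithm; the trace constraint $x_0 = x_{2k'}$ can only further restrict the count. Combining,
\[
\#\{(x_0, \ldots, x_{2k'}) \in [n]^{2k'+1} : \text{the equations of } w' \text{ hold}\} \leq o(n^3) \cdot n^{k'-2} = o(n^{k'+1}),
\]
so that $p_n^S(w') = o(1)$, which yields the lemma.

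The main obstacle is the case-by-case verification of the dependent/undetermined labeling of $\{x_{p-1}, x_p, x_{q-1}, x_q\}$ in the two structural cases of Lemma~\ref{deleting}, and the guarantee that the remaining $k'-1$ dependent variables can be solved without collision with the four anchored variables. This check is routine but relies crucially on the fact that $b, c \in \mathcal{A}(w_1')$, which forces the second occurrences of $b$ and $c$ to lie immediately adjacent to the positions of $a$, thereby pinning down which variables are undetermined in the $a$-equation; the remainder of the algorithm then proceeds without obstruction in both cases.
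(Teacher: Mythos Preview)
Your proof is correct and follows essentially the same route as the paper's: reduce to the reduced word $w'$ via Lemma~\ref{p_n(w)}, invoke the structural dichotomy of Lemma~\ref{deleting}, identify that in the $a$-equation $S(x_{p-1},x_p)=S(x_{q-1},x_q)$ the three variables $x_p,x_{q-1},x_q$ are undetermined while $x_{p-1}$ is dependent, and then bound the solution count by $o(n^3)\cdot n^{k'-2}$ using (C3). The only cosmetic difference is that the paper phrases the overcount by adjoining $x_{p-1}$ as an extra free variable and dropping all constraints but the $a$-equation, whereas you phrase it via ``the remaining $k'-1$ dependent variables are uniquely determined''; both yield the same bound, and your explicit check that $b,c\in\mathcal{A}(w_1')$ forces $x_p,x_q$ to precede second occurrences is exactly the observation the paper uses.
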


\begin{proof}
	Denote $w'$ the reduced word obtained from $w$, suppose $|w'| = 2k$, then according to Lemma~\ref{p_n(w)}, we have $p_n^{S}(w) = p_n^{S}(w')$. By Lemma~\ref{deleting}, $w'$ must be of one of the following forms with writing the variables between the letters of the word:
	
	Case 1:
	\[
	\cdots ^{x_{\alpha}}a^{x_{\alpha+1}}b^{x_{\alpha+2}} \cdots ^{x_{\beta}}a^{x_{2k}},
	\]
	where $b$ is the second occurrence here, hence $x_{\alpha+1}, x_{\beta}, x_{2k}$ are undetermined variables while $x_{\alpha}$ is a dependent variable. Hence
		\begin{align}\label{sdc-1}
            \begin{split}
			p_{n}^{S}(w') &= \frac{\#\{(x_{\alpha_{1}},x_{\alpha_{2}},\cdots,x_{\alpha_{k}},x_{2k}) \in [n]^{k+1}|F_{i}(x_{\alpha_{1}},x_{\alpha_{2}},\cdots,x_{\alpha_{k}},x_{2k}) \in [n], i=1,2,\cdots,k\}}{n^{k+1}}\\
			&\leq\frac{\#\{(x_{\alpha},x_{\alpha_{1}},x_{\alpha_{2}},\cdots,x_{\alpha_{k}},x_{2k}) \in [n]^{k+2}|S(x_{\alpha},x_{\alpha + 1}) = S(x_{\beta},x_{2k})\}}{n^{k+1}}\\
			&= \frac{\#\{(x_{\alpha},x_{\alpha + 1},x_{\beta},x_{2k}) \in [n]^4| S(x_{\alpha},x_{\alpha + 1}) = S(x_{\beta},x_{2k})\}}{n^3}.
            \end{split}
	\end{align}
	Then by the small dimension assumption on the map $S$,
	\[
	\lim_{n \to \infty} p_n^{S}(w) = 0.
	\]
	
	Case 2:
	\[
	\cdots ^{x_{\alpha}}a^{x_{\alpha+1}}b^{x_{\alpha+2}} \cdots ^{x_{\beta}}a^{x_{\beta+1}}c \cdots,
	\]
	where $b,c$ are the second occurrences here, hence $x_{\alpha+1}, x_{\beta}, x_{\beta+1}$ are undetermined variables while $x_{\alpha}$ is a dependent variable. Hence
		\begin{align}\label{sdc-2}
            \begin{split}
			p_{n}^{S}(w') &= \frac{\#\{(x_{\alpha_{1}},x_{\alpha_{2}},\cdots,x_{\alpha_{k}},x_{2k}) \in [n]^{k+1}|F_{i}(x_{\alpha_{1}},x_{\alpha_{2}},\cdots,x_{\alpha_{k}},x_{2k}) \in [n], i=1,2,\cdots,k\}}{n^{k+1}}\\ 
			&\leq\frac{\#\{(x_{\alpha},x_{\alpha_{1}},x_{\alpha_{2}},\cdots,x_{\alpha_{k}},x_{2k}) \in [n]^{k+2}|S(x_{\alpha},x_{\alpha + 1}) = S(x_{\beta},x_{\beta + 1})\}}{n^{k+1}}\\
			&= \frac{\#\{(x_{\alpha},x_{\alpha + 1},x_{\beta},x_{\beta + 1}) \in [n]^4| S(x_{\alpha},x_{\alpha + 1}) = S(x_{\beta},x_{\beta + 1})\}}{n^3}.
            \end{split}
	\end{align}
	Then by the small dimension assumption,
	\[
	\lim_{n \to \infty} p_n^{S}(w) = 0.
	\]
\end{proof}

\begin{proof}[Proof of Lemma~\ref{key}]
	By $(\ref{pns})$, it suffices to show that
	\begin{equation*}
		\lim_{n \to \infty} \sum_{|w| = 2k} p_n^{S}(w) = \frac{(2k)!}{(k+1)!k!}.
	\end{equation*}
	There are two cases:
	
	Case 1:
	$w$ is a Catalan word, then by Lemma~\ref{p_n(w)},  
	\[
	p_n^{S}(w) = p_n^{S}(\emptyset)=1,
	\] 
	
	Case 2:
	$w$ is a non-Catalan word, by Lemma~\ref{noncartalan p_n(w)}, 
	\[
	\lim_{n \to \infty} p_n^{S}(w) = 0.
	\]
	Therefore, by Lemma~\ref{Catalan}
	\begin{equation*}
		\lim_{n \to \infty} \sum_{|w| = 2k} p_n^{S}(w) = C_k = \frac{(2k)!}{(k+1)!k!}.
	\end{equation*}
\end{proof}

\subsection{Concentration of moments of the spectral measure}
\begin{proposition}\label{4-order trace}
	Assume that $S$ is a map satisfying the $C$-condition. Let $X$ be an arbitrary bounded real random variable such that $\mathbb{E}[X]=0$ and $\Var(X)=1$. Let $H_{n}^{S}$ be the corresponding random $n \times n$ $S$-patterned matrix. Then fix $r \in \mathbb{N}$, there is $\widetilde{C}_{r} < \infty$ such that for all $n \in \mathbb{N}$ we have
	\[
	\mathbb{E}\left[\left(\Tr((H_n^{S})^r) - \mathbb{E}\Tr((H_n^{S})^r)\right)^4\right] \leq \widetilde{C}_{r} n^{2r+2}.
	\]
\end{proposition}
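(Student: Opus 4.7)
The plan is the standard fourth-moment concentration argument. Using the circuit expansion (\ref{trace}), write
\[
\Tr((H_n^S)^r) - \mathbb{E}[\Tr((H_n^S)^r)] = \sum_\pi Y_\pi, \quad Y_\pi := X_\pi - \mathbb{E}[X_\pi],
\]
so that the quantity to be bounded becomes $\sum_{\pi_1, \pi_2, \pi_3, \pi_4} \mathbb{E}[Y_{\pi_1} Y_{\pi_2} Y_{\pi_3} Y_{\pi_4}]$, indexed by quadruples of length-$r$ circuits in $[n]$. By the independence of $\{X_i\}_{i\in I}$ and $\mathbb{E}[X_i]=0$, this joint expectation vanishes unless (i) every index appearing as an $S$-value in the combined edge multiset of the four circuits appears at least twice (otherwise an unpaired factor $X_i$ factors out with $\mathbb{E}[X_i]=0$), and (ii) no single circuit $\pi_{j_0}$ is disjoint in its $S$-values from the other three (otherwise $Y_{\pi_{j_0}}$ factors out as independent of the rest, giving $\mathbb{E}[Y_{\pi_{j_0}}]=0$). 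The surviving quadruples thus fall into two structural types: \emph{(A)} the four circuits are connected through cross-matches, or \emph{(B)} they split into two pairs, each internally cross-matched.

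Since $X$ is bounded, $|\mathbb{E}[Y_{\pi_1}Y_{\pi_2}Y_{\pi_3}Y_{\pi_4}]|$ is uniformly bounded by a constant $K_r$ depending only on $r$ and $\|X\|_\infty$; the task then reduces to counting the surviving quadruples. For type~\emph{(A)} with no $\pi_j$ self-matched, Lemma~\ref{quadruples} immediately gives at most $\widetilde{C}_r n^{2r+2}$ such quadruples. In type~\emph{(A)} with some $\pi_{j_0}$ self-matched, condition~(ii) forces $\pi_{j_0}$ to have an additional cross-match with one of the remaining circuits; since a self-matched circuit of length $r$ has at most $O(n^{\lfloor r/2\rfloor +1})$ realizations and the remaining three circuits are constrained through their own matchings and cross-matches with $\pi_{j_0}$, a routine adaptation of the circuit-graph counting underlying Lemma~\ref{quadruples} keeps the contribution within $O(n^{2r+2})$. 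For type~\emph{(B)}, each internally cross-matched pair of length-$r$ circuits contributes $O(n^{r+1})$ realizations (by the same kind of second-moment analysis), and the three $2+2$ partitions of $\{1,2,3,4\}$ contribute $O(n^{2r+2})$ in total. Summing over all cases yields the claimed bound.

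The main obstacle is the self-matched sub-case of type~\emph{(A)}: one must carefully adapt the combinatorial argument of Lemma~\ref{quadruples} to allow circuits that are themselves pair-matched while still meeting the $O(n^{2r+2})$ upper bound. The small-dimension assumption (C3) plays the same book-keeping role it did in the proof of Proposition~\ref{moments}, controlling the extra freedom created by the internal self-matches together with the mandatory cross-matches.
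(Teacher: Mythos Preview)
Your approach is the paper's: expand the fourth centered moment over quadruples of length-$r$ circuits, eliminate the terms violating your conditions (i) and (ii) via independence and $\mathbb{E}[X]=0$, and then bound the number of surviving quadruples by $O(n^{2r+2})$, using boundedness of $X$ to control each individual summand.

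Where you diverge is the counting, which you over-engineer. After establishing (i) and (ii), the paper simply invokes Lemma~\ref{quadruples} once and is done: no split into connected versus $2{+}2$ configurations, no self-matched sub-case, and no ``main obstacle''. In the paper (and in \cite[Proposition~4.3]{Hankel}, from which the lemma is quoted verbatim) the clause ``none of them is self-matched'' functions as shorthand for ``each $\pi_j$ carries at least one cross-matched edge'' --- exactly the set of survivors after your step~(ii). Under that reading the lemma already subsumes your type-(B) quadruples and your self-matched-but-cross-linked circuits, so the ``routine adaptation'' you propose is unnecessary. (Even under a stricter literal reading, the leftover quadruples all have an $S$-value of multiplicity $\ge 3$ in the combined $4r$-edge multiset, and a count in the style of Lemma~\ref{lem-3ord} disposes of them within $O(n^{2r+2})$; but the paper does not spell this out.)

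One point in your sketch is actually wrong: the small-dimension assumption~(C3) plays \emph{no role} in this proposition. The bound of Lemma~\ref{quadruples}, as proved in \cite{Hankel}, rests only on the coordinatewise injectivity~(C2). Condition~(C3) enters solely in Proposition~\ref{moments}, through Lemma~\ref{noncartalan p_n(w)}, to make non-Catalan words negligible; it is irrelevant to the fourth-moment concentration estimate here.
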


\begin{proof}
	Using the circuit notation of (\ref{X_{pi}}) we have that
	\begin{equation}\label{4-order moments}
		\mathbb{E}\left[\left(\Tr((H_n^{S})^r) - \mathbb{E}\Tr((H_n^{S})^r)\right)^4\right] = \sum_{\pi_1,\pi_2,\pi_3,\pi_4} \mathbb{E}\left[\prod_{j=1}^{4} \left(X_{\pi_j} - \mathbb{E}[X_{\pi_j}]\right)\right],
	\end{equation}
	where the sum is taken over all circuits $\pi_j$, $j = 1, \ldots, 4$ on $\{1, \ldots, n\}$ of length $r$ each. With the random variables $X_i\,(i \in I)$ independent and of mean zero, any circuit $\pi_k$ which is neither self-matched nor cross-matched has $\mathbb{E}[X_{\pi_k}] = 0$ and
	\[
	\mathbb{E}\left[\prod_{j=1}^{4} \left(X_{\pi_j} - \mathbb{E}[X_{\pi_j}]\right)\right] = \mathbb{E}\left[X_{\pi_k} \prod_{j\in \{1,2,3,4\}\setminus\{k\}}  \left(X_{\pi_j} - \mathbb{E}[X_{\pi_j}]\right)\right] = 0.
	\]
	Further, if one of the circuits, say $\pi_1$, is only self-matched, that is, has no cross-matched edge, then obviously
	\[
	\mathbb{E}\left[\prod_{j=1}^{4} \left(X_{\pi_j} - \mathbb{E}[X_{\pi_j}]\right)\right] = \mathbb{E}\left[X_{\pi_1} - \mathbb{E}[X_{\pi_1}]\right]\mathbb{E}\left[\prod_{j=2}^{4} \left(X_{\pi_j} - \mathbb{E}[X_{\pi_j}]\right)\right] = 0.
	\]
	Therefore, it suffices to take the sum in (\ref{4-order moments}) over all $S$-matched quadruples of circuits on $\{1, \ldots, n\}$, such that none of them is self-matched. By Lemma~\ref{quadruples}, there are at most $\widetilde{C}_{r} n^{2r+2}$ such quadruples of circuits, and with $|X|$ (hence $|X_{\pi}|$) bounded, this completes the proof.
\end{proof}

\subsection{Proofs of Helson and general cases}
\begin{proof}[Proof of Theorem \ref{thm-gen-semi}]
	The rank inequality ~\eqref{rank inequality} implies that without loss of generality we may assume that the random variable $X$ is centered. By Proposition~\ref{moments} the odd moments of the average measure $\mathbb{E}[\widehat{\mu}(H_n^{S} / \sqrt{n})]$ converge to 0, and the even moments converge to $C_k$. Recall that $\gamma_{sc}$ is a probability measure and characterized by its moments. By Chebyshev's inequality we have from Proposition~\ref{4-order trace} that for any $\delta > 0$ and $k, n \in \mathbb{N}$,
	\[
	\mathbb{P}\left[\left| \int x^k \,d\widehat{\mu}\left(\frac{H_n^{S}}{\sqrt{n}}\right) - \int x^k \,d\mathbb{E}[\widehat{\mu}\left(\frac{H_n^{S}}{\sqrt{n}}\right)] \right| > \delta \right] \leq \widetilde{C}_{k} \delta^{-4} n^{-2}.
	\]
	Thus, by the Borel--Cantelli lemma,
	\[
	\int x^k \,d\widehat{\mu}\left(\frac{H_n^{S}}{\sqrt{n}}\right) \xrightarrow[a.s.]{n\to\infty} \int x^k \,d\gamma_{sc}
	\]
	for every $k \in \mathbb{N}$. Since the moments determine $\gamma_{sc}$ uniquely, by Lemma~\ref{m to w} we have the weak convergence of $\widehat{\mu}(H_n^{S} / \sqrt{n})$ to $\gamma_{sc}$.
\end{proof}

\begin{proposition}\label{multi}
	We have   
	\[
	\lim_{n \to \infty} \frac{\# \bigl\{(x,y,z) \in [n]^3| w=\frac{xy}{z} \in [n]\bigr\}}{n^3} = 0.
	\]
\end{proposition}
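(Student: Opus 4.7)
The plan is to verify the small-dimension assumption (C3) for the Helson map $S(i,j) = ij$ by a direct divisor-counting argument relying on the bound (\ref{precise}).

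First, I would reparametrize. Setting $w = xy/z$, the condition $(x,y,z) \in [n]^3$ with $xy/z \in [n]$ is equivalent to the existence of $w \in [n]$ with $xy = zw$, so
\[
\#\bigl\{(x,y,z) \in [n]^3 : xy/z \in [n]\bigr\} = \#\bigl\{(x,y,z,w) \in [n]^4 : xy = zw\bigr\}.
\]
Partitioning according to the common value $k = xy = zw$ and noting that the number of pairs $(x,y) \in [n]^2$ with $xy = k$ equals $m(k;n) = \#(D(k) \cap [n])$ (each such pair is determined by the divisor $x$), the right-hand side becomes
\[
\sum_{k=1}^{n^2} m(k;n)^2.
\]

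Second, I would bound $m(k;n) \leq d(k)$ and invoke (\ref{precise}): since $k \leq n^2$,
\[
\max_{1 \leq k \leq n^2} m(k;n) \;\leq\; \max_{1 \leq k \leq n^2} d(k) \;=\; O\bigl(n^{2/\log \log n^2}\bigr) \;=\; n^{o(1)}.
\]
Combined with the trivial identity $\sum_{k} m(k;n) = n^2$, this yields
\[
\sum_{k=1}^{n^2} m(k;n)^2 \;\leq\; \Bigl(\max_k m(k;n)\Bigr) \sum_{k} m(k;n) \;\leq\; n^{o(1)} \cdot n^2 \;=\; o(n^3).
\]
Dividing by $n^3$ gives the claim.

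There is no serious obstacle here; the argument is essentially one application of the classical divisor bound (\ref{precise}) after the reparametrization $k = xy = zw$. The only mildly delicate point is ensuring the maximum of $d(k)$ over $k \leq n^2$ is $n^{o(1)}$, which is immediate since $\log \log n^2 \to \infty$. Once this proposition is established, it verifies condition (C3) for $S(i,j) = ij$, and Theorem~\ref{thm-main} follows from Theorem~\ref{thm-gen-semi} together with Proposition~\ref{prop-TB}.
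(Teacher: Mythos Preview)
Your argument is essentially correct and follows the same route as the paper (both reduce to the divisor bound $d(k)=k^{o(1)}$), but there is one slip: the number of pairs $(x,y)\in[n]^2$ with $xy=k$ is not $m(k;n)$, since $y=k/x$ must also lie in $[n]$. Write $c(k)$ for this count; then $c(k)\le m(k;n)\le d(k)$, the quantity in question is exactly $\sum_k c(k)^2$, and your ``trivial identity'' should read $\sum_k c(k)=n^2$ (by contrast $\sum_{k\le n^2} m(k;n)=\sum_{d\le n}\lfloor n^2/d\rfloor\asymp n^2\log n$, so the identity as you stated it is false). With this correction the estimate
\[
\sum_k c(k)^2\le\Bigl(\max_{k\le n^2} d(k)\Bigr)\sum_k c(k)=n^{o(1)}\cdot n^2=o(n^3)
\]
goes through exactly as you intended. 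The paper's proof decomposes by fixing $(z,w)$ rather than the common product $k$, but uses the same divisor bound and in fact makes the same conflation of $c(\cdot)$ with $m(\cdot;n)$; in both cases only the inequality $c\le d$ is actually needed.
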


\begin{proof}
	Recall that $d(k) = \#D(k)$ is the number of the divisors of $k$ and 
	\[
        m(k;n) =   \#\{(i,j) | i,j \in [n],ij=k\}
        \]
is the number of times $k$ appears in the $n \times n$ multiplication table. Hence we have
	\begin{align*}
		\# \bigl\{(x,y,z) \in [n]^3| w = \frac{xy}{z} \in [n]\bigr\} &= \sum_{z=1}^{n} \# \bigl\{(x,y) \in [n]^2| xy \in \{z,2z,\cdots,nz\} \bigr\}\\
		&= \sum_{z=1}^{n} \sum_{k=1}^{n} m(kz;n).
	\end{align*}
	By $(\ref{d_n})$, for any $0< \varepsilon < \frac{1}{2}$, as $n\to\infty$, we have
	\begin{align*}
		\sum_{z=1}^{n} \sum_{k=1}^{n} m(kz;n) &\leq \sum_{z=1}^{n} \sum_{k=1}^{n} d(kz)\\ 
		&\leq \sum_{z=1}^{n} \sum_{k=1}^{n} o((kz)^\varepsilon)\\
		&\leq \sum_{z=1}^{n} \sum_{k=1}^{n} o(n^{2\varepsilon})\\
		&= o(n^2 \cdot n^{2 \epsilon}) = o(n^{2+ 2\epsilon}).
	\end{align*}
	Hence
	\[
	\lim_{n \to \infty} \frac{\#\{(x,y,z) \in [n]^3| w=\frac{xy}{z} \in [n]\}}{n^3} = 0.
	\]
\end{proof}

\begin{proof}[Proof of Theorem~\ref{thm-main}]
	By Proposition~\ref{prop-TB}, it suffices to show that $S(x,y) = xy$ satisfies the $C$-condition. Clearly, the symmetry and coordinatewise injectivity are satisfied.
	
	Now we turn to verify the small dimension condition. Note that
	\[
	\frac{\#\bigl\{(x,y,z,w) \in [n]^4| xy = zw \bigr\}}{n^3}= \frac{\#\bigl\{(x,y,z) \in [n]^3| w=\frac{xy}{z} \in [n]\bigr\}}{n^3}.
	\]
	Hence by Proposition~\ref{multi},
	\[
	\lim_{n \to \infty} \frac{\#\bigl\{(x,y,z,w) \in [n]^4| xy = zw \bigr\}}{n^3} = 0.
	\]
	Therefore,  $S(x,y) = xy$ satisfies the $C$-condition.
\end{proof}

\section{Examples for some specific maps}
In this section, we apply Theorem~\ref{thm-gen-semi} to the maps $S_{\alpha}(x,y) = x^2 + y^2 + \alpha xy \, (\alpha \geq 0)$. Recall that in \S\ref{introduction}, for $n \in \mathbb{N}$, the corresponding random $n \times n$ $S_{\alpha}$-patterned matrix $H_{n}^{S_{\alpha}}$ is defined by
\[
H_{n}^{S_\alpha}=[X_{i^2+j^2+\alpha ij}]_{1 \leq i,j \leq n}.
\] 
If $\alpha = 2$, then $S_{2}(x,y)=(x+y)^2$. Hence the corresponding $H_{n}^{S_2}$ is just the usual random $n \times n$ Hankel matrix studied in \cite{Hankel}. Therefore, we suppose $\alpha \neq 2$ below. 

\begin{lemma}\label{multi to general}
	Assume that $\alpha > 0, \alpha \neq 2$. Then $S_{\alpha}(x,y)$ satisfies the $C$-condition.
\end{lemma}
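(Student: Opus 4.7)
The plan is to verify each clause of the $C$-condition for $S_\alpha(x,y)=x^2+y^2+\alpha xy$. Symmetry (C1) is immediate from the formula. For coordinatewise injectivity (C2), if $S_\alpha(x_1,y)=S_\alpha(x_2,y)$ with fixed $y\in\mathbb{N}$, the factorization $(x_1-x_2)(x_1+x_2+\alpha y)=0$ together with $x_1,x_2\geq 1$ and $\alpha y\geq 0$ forces $x_1=x_2$.

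The substantial part is the small-dimension condition
$$
N(n):=\#\{(x,y,z,w)\in[n]^4: Q(x,y)=Q(z,w)\}=o(n^3),\quad Q:=S_\alpha.
$$
I would introduce the substitution $s=x-z$, $t=y-w$ with integer $s,t\in(-n,n)$, so that $Q(z+s,w+t)=Q(z,w)$ expands to the linear equation
$$
(2s+\alpha t)\,z+(\alpha s+2t)\,w=-(s^2+t^2+\alpha s t).\quad(\ast)
$$
The key observation is that the linear map $(s,t)\mapsto(2s+\alpha t,\alpha s+2t)$ has nonzero determinant $4-\alpha^2$ (using $\alpha>0$, $\alpha\ne 2$), so both coefficients of $(\ast)$ vanish simultaneously only at $(s,t)=(0,0)$. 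The pair $(s,t)=(0,0)$ contributes the trivial $n^2$ quadruples $(x,y)=(z,w)$. In the case $s=-t\ne 0$, equation $(\ast)$ collapses (after cancelling $2-\alpha$) to $w=z+s$, yielding the ``swap'' solutions $(x,y,z,w)=(w,z,z,w)$ and an $O(n^2)$ contribution.

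For the remaining $(s,t)\neq(0,0)$ with $s\ne-t$, I would bound the number of integer solutions of $(\ast)$ in $[n]^2$. If $\alpha$ is irrational, then the slope $-(2s+\alpha t)/(\alpha s+2t)$ of the line $(\ast)$ is rational if and only if the cross-determinant $2s^2-2t^2$ vanishes (by the $\mathbb{Q}$-linear independence of $\{1,\alpha\}$); since $s\ne\pm t$ is excluded, the slope is irrational and the line carries at most one lattice point, contributing $O(n^2)$ in total. If $\alpha=p/q\in\mathbb{Q}$ in lowest terms, clearing denominators converts $(\ast)$ into an integer equation $Az+Bw=C$ with $A=2qs+pt$, $B=ps+2qt$, and the number of lattice solutions in $[n]^2$ is at most $1+n/\max(|A'|,|B'|)$, where $(A',B')=(A,B)/\gcd(A,B)$ is the primitive reduction.

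Summing in the rational case is the main technical step. Since $(s,t)\mapsto(A,B)$ is linear and invertible over $\mathbb{Q}$, each primitive pair $(A',B')$ arises from at most $O(n/\max(|A'|,|B'|))$ values of $(s,t)$ in the range $|s|,|t|<n$, so the total contribution is bounded by
$$
n\sum_{(A',B')\text{ primitive}}\frac{O(n/\max(|A'|,|B'|))}{\max(|A'|,|B'|)}=O\Bigl(n^2\sum_{\gcd(a,b)=1,\,\max\leq Cn}\frac{1}{\max(|a|,|b|)^2}\Bigr)=O(n^2\log n),
$$
using the standard Farey-type estimate for the inner sum. Combining this with the $O(n^2)$ contributions from the trivial and swap cases yields $N(n)=O(n^2\log n)=o(n^3)$, verifying (C3). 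The main obstacle is the double-counting in the rational case; once the Farey-type estimate is granted, the argument is essentially mechanical.
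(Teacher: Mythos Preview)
Your argument is essentially correct and takes a genuinely different route from the paper. The paper completes the square to write $S_\alpha(x,y)=\tfrac{2+\alpha}{4}(x+y)^2+\tfrac{2-\alpha}{4}(x-y)^2$, then changes variables twice to reduce the equation $S_\alpha(x,y)=S_\alpha(z,w)$ to a multiplicative relation $pq=\tfrac{2-\alpha}{2+\alpha}st$ and invokes the divisor bound $d(k)=o(k^\varepsilon)$ exactly as in the Helson case. Your approach instead linearizes in $(z,w)$ for each offset $(s,t)=(x-z,y-w)$ and counts lattice points on lines. The paper's reduction is cleaner and treats rational and irrational $\alpha$ uniformly; your method avoids number theory entirely (beyond an elementary Farey sum) and in fact gives the sharper bound $N(n)=O(n^2\log n)$ rather than $o(n^{2+\varepsilon})$.

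One small gap: in the irrational case you say ``since $s\ne\pm t$ is excluded,'' but you only treated $s=-t$ separately, not $s=t$. For $s=t\ne 0$ the equation $(\ast)$ becomes $(2+\alpha)s(z+w)=-(2+\alpha)s^2$, i.e.\ $z+w=-s$, and then $x+y=(z+s)+(w+s)=s$; one of $z+w,x+y$ is negative, so there are no solutions. Alternatively, this diagonal contributes at most $O(n)$ lines with $O(n)$ points each, hence $O(n^2)$. Either way it is harmless, but you should say so explicitly. The rational case and the Farey-type summation are handled correctly.
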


\begin{proof}
	Clearly, $S_{\alpha}(x,y)$ is symmetric and it is simple to verify that $S_{\alpha}(x,y)$ is coordinatewise increasing and hence coordinatewise injective. 
	
	Now we turn to verify the small dimension condition \eqref{sda} for $S_{\alpha}(x,y)$. We need to estimate the cardinality of the set:
    \begin{equation*}
        \mathcal{S}_{n}^{\alpha} := \left\{(x,y,z,w) \in [n]^4 : S_{\alpha}(x,y) = S_{\alpha}(z,w)\right\}.
    \end{equation*} 
    Note that 
	\begin{align*}
		\begin{split}
			S_{\alpha}(x,y) = x^2 + y^2 + \alpha xy &= x^2 + y^2 + 2xy + (\alpha - 2)xy\\
			&= (x+y)^2 + \frac{\alpha - 2}{4} [(x+y)^2 - (x-y)^2]\\
			&=\frac{2 + \alpha}{4}(x+y)^2 + \frac{2-\alpha}{4}(x-y)^2.
		\end{split}
	\end{align*}
	  Hence the equation $S_{\alpha}(x,y)=S_{\alpha}(z,w)$ is equivalent to
	\begin{equation}\label{square}
	(2+\alpha)(x+y)^2 + (2-\alpha)(x-y)^2 = (2+\alpha)(z+w)^2 + (2-\alpha)(z-w)^2.
	\end{equation}
    Denote 
	$x+y=a,|x-y|=b, z+w=c, |z-w|=d$. Since $x,y,z,w \in [n]$, we have $a,c \in [2,2n] \cap \mathbb{Z}$ and $ b,d \in [0,n-1] \cap \mathbb{Z}$. By (\ref{square}), the equation $S_{\alpha}(x,y)=S_{\alpha}(z,w)$ is equivalent to
	\[
	(2+\alpha)a^2 + (2-\alpha)b^2 = (2+\alpha)c^2 + (2-\alpha)d^2,
	\]
	i.e.
	\begin{equation}\label{simplify}
		(2+\alpha)(a+c)(a-c) = (2-\alpha)(d+b)(d-b).
	\end{equation}
	Denote $a+c=p,|a-c|=q,d+b=s,|d-b|=t$, then $p \in [4,4n] \cap \mathbb{Z}$, $q,s \in [0,2n-2] \cap \mathbb{Z}$ and $t \in [0,n-1] \cap \mathbb{Z}$. For convenience, we introduce the following set
    \begin{equation*}
        \mathcal{H}_{n}^{\alpha} := \left\{(p,q,s,t) \in \mathbb{Z}_{\geq 0}^{4}: 4 \leq p \leq 4n, 0 \leq q,s \leq 2n-2, 0 \leq t \leq n-1, (2+\alpha)pq = |2-\alpha|st \right\}.
    \end{equation*}
    Then the above process defines a map
    \begin{equation*}
        g: \mathcal{S}_{n}^{\alpha} \rightarrow \mathcal{H}_{n}^{\alpha}.
    \end{equation*}
    Moreover, for any $(p,q,s,t) \in \mathcal{H}_{n}^{\alpha}$, we claim that
    \begin{equation}\label{ori-image}
        \# g^{-1}(p,q,s,t) \leq 16.
    \end{equation}
    In fact, for any $(p,q,s,t)\in\mathcal{H}_{n}^{\alpha}$, we first reconstruct $(a,c)$ from $(p,q)$. From
    \[
    p=a+c,\qquad q=|a-c|
    \]
    we see that either $a\ge c$ and
    \[
    (a,c)=\Bigl(\frac{p+q}{2},\frac{p-q}{2}\Bigr),
    \]
    or $c\ge a$ and
    \[
    (a,c)=\Bigl(\frac{p-q}{2},\frac{p+q}{2}\Bigr).
    \]
    Thus for fixed $(p,q)$ there are at most $2$ possible pairs $(a,c)$
    (ignoring the additional restrictions that $a,c\in[2,2n]\cap\mathbb{Z}$,
    which can only reduce the number of possibilities). 
    
    Similarly, from a given $(s,t)$, using pair
    \[
    s=d+b,\qquad t=|d-b|
    \]
    we obtain at most $2$ possible pairs $(b,d)$:
    \[
    (b,d)=\Bigl(\frac{s+t}{2},\frac{s-t}{2}\Bigr)
    \quad\text{or}\quad
    (b,d)=\Bigl(\frac{s-t}{2},\frac{s+t}{2}\Bigr).
    \]
    Hence for fixed $(p,q,s,t)$ there are at most $4$ possible quadruples $(a,b,c,d)$.
    
    Next, we reconstruct $(x,y)$ from $(a,b)$. Since
    \[
    a=x+y,\qquad b=|x-y|,
    \]
    either $x\ge y$ and
    \[
    (x,y)=\Bigl(\frac{a+b}{2},\frac{a-b}{2}\Bigr),
    \]
    or $y\ge x$ and
    \[
    (x,y)=\Bigl(\frac{a-b}{2},\frac{a+b}{2}\Bigr).
    \]
    Thus for each admissible pair $(a,b)$ there are at most $2$ possible pairs
    $(x,y)$ (again, the conditions $x,y\in[n]$ can only decrease this number).
    The same reasoning applies to $(c,d)$ and $(z,w)$: for each $(c,d)$ there are
    at most $2$ possible pairs $(z,w)$. Therefore, for each fixed $(a,b,c,d)$ there are at most $4$ possible quadruples $(x,y,z,w)$. Combining this with the $4$ possibilities for
    $(a,b,c,d)$, we obtain
    \[
    \#\,g^{-1}(p,q,s,t)\le 4\times 4 = 16,
    \]
    which proves \eqref{ori-image}.
    
    Therefore, by \eqref{ori-image} we can get 
    \begin{equation*}
        \# \mathcal{S}_{n}^{\alpha} \leq 16 \cdot \# \mathcal{H}_{n}^{\alpha} \leq 16 \cdot \# \mathcal{T}_{n}^{\alpha}
    \end{equation*}
    where
    \begin{equation*}
      \mathcal{T}_{n}^{\alpha}:=  \left\{(p,q,s,t) \in \mathbb{Z}_{\geq 0}^{4}: 1 \leq p \leq 4n, 0 \leq q,s \leq 4n, 0 \leq t \leq 4n, (2+\alpha)pq = |2-\alpha|st \right\}.
    \end{equation*}
    Note that 
    \begin{equation*}
        \mathcal{T}_{n}^{\alpha} = \bigl\{(p,q,s,t) \in [4n]^4| (2+\alpha)pq = |2-\alpha| st\bigr\} \bigsqcup \mathcal{Q}_{n}^{\alpha}
    \end{equation*}
    where
    \begin{equation*}
    \mathcal{Q}_{n}^{\alpha} := \left\{(p,q,s,t)\in\{0,1,\ldots,4n\}^4: 1\le p\le 4n,\ q=0,\ s \cdot t=0\right\}.
    \end{equation*}
	We can compute that $\# \mathcal{Q}_{n}^{\alpha} = 4n(8n+1)$. Therefore, for any $0 < \varepsilon < \frac{1}{2}$, we can get
	\begin{align*}
		\lim_{n \to \infty} \frac{\# \mathcal{S}_{n}^{\alpha}}{n^3} &\leq \lim_{n \to \infty} \frac{16 \cdot \# \mathcal{T}_{n}^{\alpha}}{n^3} \\
        &= \lim_{n \to \infty} \frac{16 \cdot \#\bigl\{(p,q,s,t) \in [4n]^4| pq = \frac{|2-\alpha|}{2+\alpha}st\bigr\} + 64n(8n+1)}{n^3}\\ 
		&=16 \cdot \lim_{n \to \infty} \frac{\sum_{s=1}^{4n}\sum_{t=1}^{4n}m(\frac{|2-\alpha|}{2+\alpha}st;4n)}{n^3}\\ 
		&\leq 16 \cdot \lim_{n \to \infty} \frac{\sum_{s=1}^{4n}\sum_{t=1}^{4n}d(\frac{|2-\alpha|}{2+\alpha}st)}{n^3}\\ 
		&= \lim_{n \to \infty} \frac{O(n^{2} \cdot n^{2\epsilon})}{n^3} = 0.
	\end{align*}
	Hence the small dimension condition \eqref{sda} for $S_{\alpha}(x,y)$ is satisfied.
\end{proof}

Now we will explain why the small dimension condition does not hold for $\alpha = 2$. 
\begin{lemma}
   We have 
   \[
   \lim_{n \to \infty} \frac{\#\bigl\{(x,y,z,w) \in [n]^4| S_{2}(x,y) = S_{2}(z,w)\bigr\}}{n^3} = \frac{2}{3}.
   \]
\end{lemma}

\begin{proof}
Note that
\[
\#\bigl\{(x,y,z,w) \in [n]^4| S_{2}(x,y) = S_{2}(z,w)\bigr\} = \#\bigl\{(x,y,z) \in [n]^3| x+y-z \in [n]\bigr\}.
\]
Denote $s = x + y$ and then $s$ ranges over the integers $2,3,\dots,2n$. For each fixed $s$, the number of pairs $(x,y)\in[n]^2$ with $x+y=s$ is
\[
a_s \;=\;\#\bigl\{(x,y)\in[n]^2: x+y=s \bigr\}
=\begin{cases}
s-1, &2\le s\le n+1,\\
2n+1-s, &n+2\le s\le 2n.
\end{cases}
\]
Next, for a given $s$, we require
\[
1 \,\le\, x+y-z = s-z \,\le\, n
\quad\Longleftrightarrow\quad
s-n \le z \le s-1,
\]
with $z\in [n]$.  Hence the number of valid $z$–values is
\[
b_s \;=\;\#\bigl\{\,z\in[n]:s-n\le z\le s-1\bigr\}
=\begin{cases}
s-1, &2\le s\le n+1,\\
2n+1-s, &n+2\le s\le 2n,
\end{cases}
\]
so in fact $b_s=a_s$. Therefore
\[
\#\bigl\{(x,y,z) \in [n]^3| x+y-z \in [n]\bigr\}
=\sum_{s=2}^{2n}a_s\,b_s
=\sum_{s=2}^{2n}a_s^2
=\sum_{k=1}^{n}k^2 \;+\;\sum_{k=1}^{n-1}k^2
=n^2 \;+\;2\sum_{k=1}^{n-1}k^2.
\]
As we all know that
\[
\sum_{k=1}^{n-1}k^2
=\frac{(n-1)n(2n-1)}{6}
=\frac{2n^3-3n^2+n}{6},
\]
so
\[
\#\bigl\{(x,y,z) \in [n]^3| x+y-z \in [n]\bigr\}
=n^2 \;+\;2\cdot\frac{2n^3-3n^2+n}{6}
=\frac{2n^3 + n}{3}
=\frac23\,n^3 + \frac13\,n.
\]
Hence
\[
\lim_{n \to \infty} \frac{\#\bigl\{(x,y,z,w) \in [n]^4| S_{2}(x,y) = S_{2}(z,w)\bigr\}}{n^3} = \frac{2}{3},
\]
which means that the small dimension condition does not hold for $\alpha = 2$. 
\end{proof}

Now let's give the result on the weak limit of $\widehat{\mu}(H_{n}^{S_\alpha} / \sqrt{n})$:
\begin{theorem}
	Assume that $\alpha \geq 0, \alpha \neq 2$. Let $X$ be an arbitrary bounded real random variable such that  $\Var(X)=1$. Let $H_{n}^{S_\alpha}$ be the corresponding random $n \times n$ $S_{\alpha}$-patterned matrix. Then, with probability 1, $\widehat{\mu}(H_{n}^{S_\alpha} / \sqrt{n})$ converges weakly as $n \rightarrow \infty$ to the Wigner semi-circular law $\gamma_{sc}$.
\end{theorem}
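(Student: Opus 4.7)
The plan is to reduce the statement directly to Theorem~\ref{thm-gen-semi} by verifying that the map $S_\alpha$ satisfies the $C$-condition for every $\alpha \in [0,\infty) \setminus \{2\}$. Once the $C$-condition is in place, Theorem~\ref{thm-gen-semi} delivers the almost sure weak convergence of $\widehat{\mu}(H_n^{S_\alpha}/\sqrt{n})$ to $\gamma_{sc}$ without any further work. So the whole proof boils down to a (C1)--(C3) check.

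For $\alpha > 0$ with $\alpha \neq 2$, the $C$-condition is already established in Lemma~\ref{multi to general}, so there is nothing new to do. The only case requiring separate attention is $\alpha = 0$, i.e.\ $S_0(x,y) = x^2 + y^2$, which is not formally covered by the statement of Lemma~\ref{multi to general}.

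For $\alpha = 0$ I would verify the three clauses of the $C$-condition directly. Symmetry is immediate; coordinatewise injectivity follows because for each fixed $y \in \N$, the map $x \mapsto x^2 + y^2$ is strictly increasing on $\N$. The main content is the small dimension bound, for which I would mimic the algebraic substitution of Lemma~\ref{multi to general}: setting $a = x+y$, $b = x-y$, $c = z+w$, $d = z-w$, the equation $x^2 + y^2 = z^2 + w^2$ becomes $a^2 + b^2 = c^2 + d^2$, and then with $p = a+c$, $q = a-c$, $s = d+b$, $t = d-b$ this collapses to $pq = st$. The number of integer quadruples in the resulting boxes is then controlled by $\sum_{s,t \leq 4n} d(st)$, and invoking $(\ref{d_n})$ yields the estimate $o(n^{2+2\varepsilon})$ for any $0 < \varepsilon < 1/2$, which is $o(n^3)$ as required. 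Alternatively, one can appeal to Jacobi's classical bound $r_2(M) \leq 4\, d(M)$ on the number of representations of $M$ as a sum of two squares and combine it with $(\ref{d_n})$ to reach the same conclusion.

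The main obstacle, if one can call it that, is genuinely mild: the $\alpha = 0$ case is a direct extension of the argument already carried out for $\alpha > 0$, $\alpha \neq 2$, and in fact the proof of Lemma~\ref{multi to general} goes through verbatim at $\alpha = 0$ (the ratio $\tfrac{2-\alpha}{2+\alpha} = 1$ is perfectly well defined there, and the coordinatewise-monotonicity argument still applies). With (C1)--(C3) verified across the full range $\alpha \in [0,\infty) \setminus \{2\}$, Theorem~\ref{thm-gen-semi} closes the proof.
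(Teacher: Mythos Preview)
Your proposal is correct and follows essentially the same route as the paper, which simply invokes Theorem~\ref{thm-gen-semi} together with Lemma~\ref{multi to general}. You are in fact slightly more careful than the paper: Lemma~\ref{multi to general} is stated only for $\alpha>0$, so the case $\alpha=0$ is not formally covered there, and your observation that the same substitution and divisor bound go through verbatim at $\alpha=0$ (where $\tfrac{2-\alpha}{2+\alpha}=1$) fills that small gap.
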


\begin{proof}
	It follows immediately from Theorem~\ref{thm-gen-semi} and Lemma~\ref{multi to general}.
\end{proof}


\begin{thebibliography}{10}
	
	\bibitem{Intro-RM}
	Greg~W. Anderson, Alice Guionnet, and Ofer Zeitouni.
	\newblock {\em An introduction to random matrices}, volume 118 of {\em
		Cambridge Studies in Advanced Mathematics}.
	\newblock Cambridge University Press, Cambridge, 2010.
	
	\bibitem{divisor}
	Tom~M. Apostol.
	\newblock {\em Introduction to analytic number theory}.
	\newblock Undergraduate Texts in Mathematics. Springer-Verlag, New
	York-Heidelberg, 1976.
	
	\bibitem{RI}
	Zhidong Bai.
	\newblock Methodologies in spectral analysis of large-dimensional random
	matrices, a review.
	\newblock {\em Statist. Sinica}, 9(3):611--677, 1999.
	\newblock With comments by G. J. Rodgers and Jack W. Silverstein; and a
	rejoinder by the author.
	
	\bibitem{SA}
	Zhidong Bai and Jack~W. Silverstein.
	\newblock {\em Spectral analysis of large dimensional random matrices}.
	\newblock Springer Series in Statistics. Springer, New York, second edition,
	2010.
	
	\bibitem{MC}
	Afonso~S. Bandeira, March~T. Boedihardjo, and Ramon van Handel.
	\newblock Matrix concentration inequalities and free probability.
	\newblock {\em Invent. Math.}, 234(1):419--487, 2023.
	
	\bibitem{patterned}
	Arup Bose.
	\newblock {\em Random Matrices and Non-Commutative Probability}.
	\newblock Chapman and Hall/CRC, 2021.
	
	\bibitem{Cata}
	Arup Bose and Arnab Sen.
	\newblock Another look at the moment method for large dimensional random
	matrices.
	\newblock {\em Electron. J. Probab.}, 13:no. 21, 588--628, 2008.
	
	\bibitem{P-onto}
	Ole~F. Brevig and Nazar Miheisi.
	\newblock Projecting onto {H}elson matrices in {S}chatten classes.
	\newblock {\em Studia Math.}, 256(1):109--119, 2021.
	
	\bibitem{fo}
	Ole~F. Brevig and Karl-Mikael Perfekt.
	\newblock Failure of {N}ehari's theorem for multiplicative {H}ankel forms in
	{S}chatten classes.
	\newblock {\em Studia Math.}, 228(2):101--108, 2015.
	
	\bibitem{Hankel}
	W{\l}odzimierz Bryc, Amir Dembo, and Tiefeng Jiang.
	\newblock Spectral measure of large random {H}ankel, {M}arkov and {T}oeplitz
	matrices.
	\newblock {\em Ann. Probab.}, 34(1):1--38, 2006.
	
	\bibitem{BL}
	Richard M. Dudley.
	\newblock {\em Real analysis and probability}, volume~74 of {\em Cambridge
		Studies in Advanced Mathematics}.
	\newblock Cambridge University Press, Cambridge, 2002.
	\newblock Revised reprint of the 1989 original.
	
	\bibitem{method}
	Michael Fleermann and Werner Kirsch.
	\newblock Proof methods in random matrix theory.
	\newblock {\em Probab. Surv.}, 20:291--381, 2023.
	
	\bibitem{Ford}
	Kevin Ford.
	\newblock The distribution of integers with a divisor in a given interval.
	\newblock {\em Ann. of Math. (2)}, 168(2):367--433, 2008.
	
	\bibitem{Henry-Helson}
	Henry Helson.
	\newblock Hankel forms and sums of random variables.
	\newblock {\em Studia Math.}, 176(1):85--92, 2006.
	
	\bibitem{Lidski}
	Viktor B. Lidski\u{\i}.
	\newblock On the characteristic numbers of the sum and product of symmetric
	matrices.
	\newblock {\em Doklady Akad. Nauk SSSR (N.S.)}, 75:769--772, 1950.
	
	\bibitem{Helson}
	Nazar Miheisi and Alexander Pushnitski.
	\newblock A Helson matrix with explicit eigenvalue asymptotics.
	\newblock {\em Journal of Functional Analysis}, 275(4):967--987, 2018.
	
	\bibitem{lower-bound}
	Joaquim Ortega-Cerd\`a and Kristian Seip.
	\newblock A lower bound in {N}ehari's theorem on the polydisc.
	\newblock {\em J. Anal. Math.}, 118(1):339--342, 2012.
	
	\bibitem{finite}
	Karl-Mikael Perfekt and Alexander Pushnitski.
	\newblock On Helson matrices: moment problems, non-negativity, boundedness, and
	finite rank.
	\newblock {\em Proceedings of the London Mathematical Society},
	116(1):101--134, 2018.
	
	\bibitem{Teicher}
	Henry Teicher.
	\newblock Almost certain convergence in double arrays.
	\newblock {\em Z. Wahrsch. Verw. Gebiete}, 69(3):331--345, 1985.
	
	\bibitem{Wigner}
	Eugene~P. Wigner.
	\newblock On the distribution of the roots of certain symmetric matrices.
	\newblock {\em Ann. of Math. (2)}, 67:325--327, 1958.
	
\end{thebibliography}

\end{document}